\documentclass[12pt]{amsart}
\usepackage[dvipdfmx]{graphicx}
\title[Nielsen equivalence and multisections of 4-manifolds]
{Nielsen equivalence and multisections of 4-manifolds}

\author{Tsukasa Isoshima}
\address{Department of Mathematics, Tokyo Institute of Technology, 2-12-1 Ookayama, Meguro-ku, Tokyo, 152-8551, Japan}
\email{isoshima.t.aa@m.titech.ac.jp}

\author{Masaki Ogawa}
\address{Mathematical science center for co-creative society, Tohoku University, Aoba-6-3 Aramaki, Aoba Ward, Sendai, Miyagi 980-0845}
\email{masaki.ogawa.b7@tohoku.ac.jp}

\usepackage[utf8]{inputenc}
\usepackage{amsmath,amssymb,amsthm,amsfonts}
\usepackage{epsfig}
\usepackage{txfonts}
\usepackage{graphicx}
\usepackage{ulem}
\usepackage{color}
\usepackage[all]{xy}
\usepackage{fancybox}
\usepackage{lscape}
\usepackage{comment}
\usepackage{here}
\usepackage{setspace}
\usepackage{bm}

\usepackage[hang,small,bf]{caption}
\usepackage[subrefformat=parens]{subcaption}
\captionsetup{compatibility=false}

 
\theoremstyle{plain}
\newtheorem{theorem}{Theorem}[section]
\newtheorem{lemma}[theorem]{Lemma}
\newtheorem{corollary}[theorem]{Corollary}
\newtheorem{proposition}[theorem]{Proposition}

\theoremstyle{definition}
\newtheorem{definition}[theorem]{Definition}

\newtheorem{remark}[theorem]{Remark}
\newtheorem{example}[theorem]{Example}

\theoremstyle{definition}


\definecolor{myred}{rgb}{.8,.0,.0}
\definecolor{mygreen}{rgb}{.0,.6,.0}
\definecolor{mygray}{gray}{0.7}

\makeatletter 
 \@mparswitchfalse 
\makeatother


\newcounter{mystepcount}

\begin{document}

\maketitle

\begin{abstract}
	Islambouli showed that there exist infinitely many 4-manifolds admitting non-isotopic trisections using a Nielsen equivalence, which can be used to construct non-isotopic Heegaard splittings. In this paper, we show that there exist infinitely many 4-manifolds admitting non-isotopic bisections in the same way. Moreover, we show that there exist infinitely many 4-manifolds admitting non-isotopic multisections by using the non-isotopic bisections in two ways.
\end{abstract}

\section{Introduction}
A trisection (resp. relative trisection) is roughly a decomposition of a closed 4-manifold (resp. a 4-manifold with boundary) into three 4-dimensional 1-handlebodies. These decompositions were introduced by Gay and Kirby \cite{GK}. They showed that any closed 4-manifold admits a trisection, and any 4-manifold with boundary admits a relative trisection. After that, Islambouli and Naylor \cite{IN} introduced a multisection (or $n$-section) of a closed 4-manifold which decomposes the 4-manifold into $n$ 1-handlebodies for any integer $n \ge 3$. When $n=2$ and the 4-manifold has a boundary, this decomposition is called a bisection. It is known that any smooth, compact, connected 2-handlebody with connected boundary admits a bisection \cite{IN}. In this paper, we will focus on bisections and 4-sections in particular.

A trisection can be regarded as a 4-dimensional analogy of a Heegaard splitting which is a decomposition of a closed 3-manifold into two 3-dimensional 1-handledodies. The classification of Heegaard splittings is one of main problems in studying Heegaard splittings, and there are several works on the classification. For example, it is well known by Waldhausen that two Heegaard splittings of $S^3$ with the same genus are isotopic \cite{W}. As a 4-dimensional analogy of this result, it is conjectured by Meier, Schirmer, and Zupan that two trisections of $S^4$ with the same type are isotopic \cite{MSZ}, which is called 4-dimensional Waldhausen's conjecture. Note that two trisections of the same 4-manifold are isotopic after some stabilizations \cite{GK}. In the present, there are some examples of complicated trisections isotopic to the standard trisection of $S^4$ (e.g. \cite{IS}). For 4-manifolds other than $S^4$, it is shown by Islambouli \cite{I} that there exist infinitely many 4-manifolds with non-isotopic trisections. He uses a Nielsen equivalence to prove the result which was originally used to show the existence of non-isotopic Heegaard splittings. In this paper, we show a bisectional and multisectional analogy of Islambouli's result in the same way. 

Our first result is the following:

\setcounter{section}{4}
\setcounter{theorem}{5}
\begin{theorem}
	For every $n\geq 2$, there exist 4-manifolds that admit non-isotopic $(2n, n)$-bisections of minimal genus.
\end{theorem}

In the proof of this theorem, we construct non-isotopic bisections of certain 4-manifolds from non-isotopic Heegaard splittings. The second result can be obtained by considering other bisections constructed obviously from trisections.

\setcounter{section}{5}
\setcounter{theorem}{2}
\begin{corollary}
	For every $n\geq 2$, there exist 4-manifolds that admit non-isotopic $(3n, n)$-bisections.
\end{corollary}

The third result can be obtained by considering 4-sections constructed by doubling the bisections in Theorem \ref{thm:bisec} as non-isotopic multisections produced immediately from the non-isotopic bisections.

\setcounter{section}{6}
\setcounter{theorem}{2}
\begin{theorem}
	For every $n\geq 2$, there exist 4-manifolds which admit non-isotopic $(2n, n)$ 4-sections of minimal genus.
\end{theorem}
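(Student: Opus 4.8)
The plan is to feed Theorem~\ref{thm:bisec} into a doubling construction. Fix $n\geq 2$, let $W$ be one of the $4$-manifolds produced by Theorem~\ref{thm:bisec}, and let $\mathcal B$ and $\mathcal B'$ be two non-isotopic $(2n,n)$-bisections of $W$ of minimal genus. Since $W$ is a compact connected $2$-handlebody with connected boundary, its double $X=DW=W\cup_{\partial W}\overline W$ is a closed connected $4$-manifold. Given the bisection $(W;H_1,H_2)$, I form the tuple $(X;H_1,H_2,\overline{H_2},\overline{H_1})$, with the four $4$-dimensional $1$-handlebodies arranged cyclically and with central surface the double of the bisection surface; this is a doubling construction in the spirit of the relative trisections of \cite{GK}. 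One checks directly that the three pairwise union loci are compression bodies (two of the associated Heegaard surfaces being copies of the bisection surface, the third its double along the splitting of $\partial W$ induced by $\mathcal B$), so that $D\mathcal B:=(X;H_1,H_2,\overline{H_2},\overline{H_1})$ is a genuine $4$-section of $X$, and that tracking genera and the parameters $k_i$ through the doubling produces a $(2n,n)$ $4$-section. Set $\mathcal T=D\mathcal B$ and $\mathcal T'=D\mathcal B'$.

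Next I record two auxiliary facts. First, because $W$ is a $2$-handlebody the inclusion-induced map $\pi_1(\partial W)\to\pi_1(W)$ is surjective, so the van Kampen presentation $\pi_1(X)=\pi_1(W)\ast_{\pi_1(\partial W)}\pi_1(W)$, together with the fold retraction $r\colon X\to W$ (the identity on $W$, the mirror identification on $\overline W$), collapses to an isomorphism $\pi_1(X)\cong\pi_1(W)$. Second, the spine of a $1$-handlebody piece of $\mathcal B$ — say $H_1$ — determines, exactly as in the proof of Theorem~\ref{thm:bisec}, a generating system of $\pi_1(W)$; the spine of the \emph{same} piece $H_1$, now viewed inside $\mathcal T$, determines a generating system of $\pi_1(X)$, and under $r_\ast$ (equivalently, under $\pi_1(X)\cong\pi_1(W)$) the two systems agree up to Nielsen equivalence. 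Using these, minimality of the genus of $\mathcal T$ follows from a genus lower bound for $4$-sections of $X$ expressed in terms of $\pi_1(X)\cong\pi_1(W)$ and the homology and Euler characteristic of $X$ (which are determined by those of $W$), combined with the fact that $\mathcal B$ already realized the corresponding bound for $W$. Minimality is not cosmetic here: it is precisely what forces isotopic minimal-genus $4$-sections to have genuinely Nielsen-equivalent — rather than merely stably Nielsen-equivalent — generating systems.

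Now the non-isotopy. Suppose for contradiction that $\mathcal T$ and $\mathcal T'$ are isotopic. As both are minimal-genus $4$-sections of $X$, their associated generating systems of $\pi_1(X)$ are Nielsen equivalent — the only moves relating the generating systems of two isotopic minimal-genus multisections are handle slides within a single piece, i.e.\ Nielsen transformations. Applying the surjection $r_\ast\colon\pi_1(X)\to\pi_1(W)$ (Nielsen equivalence is preserved by any group homomorphism, its defining moves being preserved) and using the second auxiliary fact, the generating systems of $\pi_1(W)$ associated to $\mathcal B$ and $\mathcal B'$ would be Nielsen equivalent. But these are exactly the systems shown in the proof of Theorem~\ref{thm:bisec} to be Nielsen \emph{in}equivalent, which is how $\mathcal B$ and $\mathcal B'$ were distinguished there. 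This contradiction gives $\mathcal T\not\sim\mathcal T'$; letting $n$ range over $n\geq 2$ and running through each pair of non-isotopic minimal-genus bisections supplied by Theorem~\ref{thm:bisec} completes the proof.

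The crux I anticipate is the passage from $\mathcal T\sim\mathcal T'$ to a statement about $\mathcal B$ and $\mathcal B'$: an isotopy of the $4$-section $\mathcal T$ of $X$ need not respect the decomposition $X=W\cup\overline W$, so one cannot simply restrict it to an isotopy of $\mathcal B$. The way around this is to never restrict the isotopy at all, but to keep all information at the level of the generating system of $\pi_1(X)$ — which any isotopy of a minimal-genus $4$-section preserves up to Nielsen equivalence, irrespective of how it moves the pieces — and only afterwards push forward along $r_\ast$. The care required lies in establishing the second auxiliary fact cleanly: that the generating system $\mathcal T$ assigns to $\pi_1(X)$ maps, up to Nielsen equivalence, onto the one $\mathcal B$ assigns to $\pi_1(W)$, and that bookkeeping the contributions of all four pieces of $\mathcal T$ introduces no spurious generators.
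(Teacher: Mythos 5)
Your overall strategy — double the non-isotopic bisections of Theorem~\ref{thm:bisec}, identify $\pi_1$ of the double with $\pi_1$ of the original manifold, and use Lemma~\ref{lem:fund2} to push the Nielsen inequivalence through — is exactly the paper's strategy, and your cyclic arrangement of the four pieces with the mirror identifications is the same $4$-section the paper constructs. Your fold-retraction argument that $\pi_1(DW)\cong\pi_1(W)$ (via surjectivity of $\pi_1(\partial W)\to\pi_1(W)$ for a $2$-handlebody and collapsing the amalgamated free product) is a clean, more abstract replacement for the paper's hands-on van Kampen computation in Lemma~\ref{lem:spine3}, which exploits the explicit product structure $X=M^\circ\times I$ and the identification of the spines of $V_1\times\{0\}$ and $V_1\times\{1\}$.

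That said, there are two places where the proposal goes wrong or stops short. First, the claim that \emph{minimality} of the genus is ``precisely what forces isotopic minimal-genus $4$-sections to have genuinely Nielsen-equivalent --- rather than merely stably Nielsen-equivalent --- generating systems'' is incorrect. Lemma~\ref{lem:fund2} applies to \emph{any} two isotopic $(g,k)$ multisections: an ambient isotopy preserves the genus, so the Nielsen classes are well-defined at genus $g$ and must coincide, with no minimality hypothesis. Stable Nielsen equivalence enters only when one compares multisections of \emph{different} genera, which is not the situation here. Minimality matters solely for the ``of minimal genus'' clause in the theorem statement, which is a separate (and itself not fully argued in your sketch) verification, not a prerequisite of the Nielsen mechanism. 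Second, the ``second auxiliary fact'' --- that the spine of $X_1$, read once inside $\mathcal{B}\subset W$ and once inside $\mathcal{T}\subset DW$, yields the \emph{same} generating system under $r_\ast$ --- is exactly the content of the paper's Lemma~\ref{lem:spine3}, and you flag it as the crux but never actually establish it. The subtlety (which the paper resolves by tracking the images of $v_i$ and $v_i'$ under $\iota_\ast$ and showing they agree, so that van Kampen produces literally the presentation $\langle s_1,\dots,s_g\mid R\rangle$) is that the $4$-section has four handlebody pieces whose spines all inject into $\pi_1(DW)$, and one must verify that no extra generators or relations appear. Asserting $r|_W=\mathrm{id}$ is a good start but does not by itself control the contributions of $\overline{X_1}$ and $\overline{X_2}$; you need the analogue of the paper's computation (or an argument that $i_\ast\colon\pi_1(X_1)\to\pi_1(DW)$ already surjects, so the other pieces contribute only relations) to make this step precise.
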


As a generalization of Theorem \ref{thm:4-sec}, we lastly construct non-isotopic multisections of 4-manifolds with any number of sectors in two ways.

The first way is that we construct an $n$-section of a 4-manifold with boundary by modifying the bisection. Then, we have the following:

\setcounter{section}{7}
\setcounter{theorem}{2}
\begin{theorem}
	For every $n\geq 2$ and $m\geq 4$, there exist 4-manifolds that admit non-isotopic genus $2n$ m-sections.
\end{theorem}

The second way is that we construct a $2n$-section of a 4-manifold with boundary by gluing $n$ bisections in Theorem \ref{thm:bisec} and glue the $2n$-section and the one bisection. Then, we have the following:

\setcounter{section}{8}
\setcounter{theorem}{5}
\begin{theorem}
For every $n \ge 2$ and $m \ge 2$, there exist 4-manifolds that admit non-isotopic $(2n,n)$ $2m$-sections of the minimal genus.
\end{theorem}

We can regard the $2n$-section as a $2n-1$-section by thinking of two adjacent sectors as one sector. Then, we have the following:

\setcounter{section}{8}
\setcounter{theorem}{6}
\begin{theorem}
For every $n \ge 2$ and $m \ge 2$, there exist 4-manifolds that admit non-isotopic $(2n,n)$ $2m+1$-sections of the minimal genus.
\end{theorem}

This paper is organized as follows: 
In Section \ref{sec:prelimi}, we review the definition of trisections, multisections and their diagrams, and their equivalence classes. In Section \ref{sec:nielsen}, we recall the Nielsen equivalence and a basic property of bisections and multisections with respect to the Nielsen equivalence. We show the above theorems in Sections \ref{sec:bisect_from_HS}, \ref{sec:bisec_from_trisec}, \ref{sec:4-sec}, \ref{sec:multi0} and \ref{sec:multi}.

\setcounter{section}{1}
\section{preliminalies}\label{sec:prelimi}
In this section, we set up the notions and objects we use in this paper.
Throughout this paper, we suppose any 4-manifold is smooth, compact, connected, and oriented.
 \begin{definition}
Let $g,k_1,k_2$ and $k_3$ be non-negative integers with $\max\{k_1,k_2,k_3\}\leq g$. 
A $(g; k_1, k_2, k_3)$-{\it trisection} of a closed 4-manifold $X$ 
is a decomposition $X=X_1\cup X_2\cup X_3$ such that for $i,j\in\{1,2,3\}$, 
\begin{itemize}
 \item
$X_i\cong \natural^{k_i}(S^1\times B^3)$, 
 \item
$H_{ij}=X_i\cap X_j\cong \natural^g (S^1\times B^2)$ if $i\neq j$, and 
 \item
$\Sigma=X_1\cap X_2\cap X_3 \cong \#^g (S^1\times S^1)$. 
\end{itemize}
\end{definition}

The classification problem of trisections is as follows.

\begin{definition}
	Two trisections $X_1\cup X_2\cup X_3$ and $X_1'\cup X_2'\cup X_3'$ of $X$ are {\it isotopic} if there is an amibient isotopy $\{ \varphi_t \}$ for $t\in [0, 1]$ such that $\varphi_1(X_i)=X_i'$ for $i=1, 2, 3$.
	Two trisections above are {\it diffeomorphic} if there is a self-diffeomorphism $\psi : X \to X$ of $X$ such that $\psi(X_i)=X_i'$ for $i=1, 2, 3$.
\end{definition}

In this setting, there are several results on the classification of trisections. First, it is conjectured, called the 4-dimensional Waldhausen conjecture, that each trisection of the 4-sphere is isotopic to the stabilization of the genus 0 trisection. For this conjecture, the authors considered the trisections obtained by Gluck twisting along certain spun knots in \cite{IO} and the first author found infinitely many trisections for Gluck twisting that are diffeomorphic to the stabilization of the genus $0$ trisection of $S^4$ \cite{I}. 

On the other hand, Islambouli showed that there are infinitely many 4-manifolds that admit non-isotopic trisections in \cite{I}.
He constructed non-isotopic trisections by considering the spins of non-isotopic Heegaard splittings and showed they are non-isotopic by using the Nielsen equivalence of generators of the fundamental group. We recall the Nielsen equivalence in Section 2.

In this paper, we consider the following decomposition of a 4-manifold which is called a multisection. 
This is defined by Islambouli and Naylor in \cite{IN}. 
 \begin{definition}
For $n\in \mathbb{Z}_{\geq 3}$, let $g,k_1,k_2, \ldots,k_{n-1}$ and  $k_n$ be non-negative integers with $k_i \leq g$ for $i\in \{1,\dots n\}$. 
A $(g; k_1, k_2, \ldots, k_n)$-{\it n-section} of a closed 4-manifold $X$ 
is a decomposition $X=X_1\cup X_2\cup \cdots \cup X_n$ such that for $i,j\in\{1,2,\ldots, n\}$, 
\begin{itemize}
 \item
$X_i\cong \natural^{k_i}(S^1\times B^3)$, 
 \item
$H_{ij}=X_i\cap X_j\cong \natural^g (S^1\times B^2)$ if $|i - j|=1$ otherwise $X_i\cap X_j\cong \Sigma_g$
 \item
$\partial X_i=H_{i-1, i}\cup H_{i, i+1}$ gives a genus $g$ Heegaard splitting of $\partial X_i$. 
\end{itemize}

We call $\Sigma_g$ the {\it central surface}, $X_i$ the \textit{sector} for each $i$ and $g$ the {\it genus} of the multisection.
The union $H_{ij}\cup H_{kl}$ is a Heegaard splitting of genus $g$ for any $| i - j |,  |k - l | =1$. If $H_{ij}\cup H_{kl}$ is not the boundary $\partial X_m$ for any $m\in \{1, \ldots , n\}$, it is called the {\it cross-section}
\end{definition}

The decomposition of an $n$-section is described by an $n$-section diagram.

\begin{definition}
A {\it $(g; k_1, k_2, \ldots, k_n)$-$n$-section diagram} is an ordered $(n+1)$-tuple $(\Sigma_g, c_1,c_2, \ldots, c_n)$ such that $(\Sigma_g, c_i, c_{i+1})$ is a Heegaard diagram of $\#_{k_i} S^1 \times S^2$ for $i=1,2,\ldots, n$, where $c_{n+1}=c_1$.
\end{definition}


The equivalence classes of multisections are defined as follows as well as them of trisections.
\begin{definition}
		Two multisections $X_1\cup X_2\cup \cdots \cup X_n$ and $X_1'\cup X_2'\cup \cdots \cup X_n'$ of $X$ are {\it isotopic} if there is an amibient isotopy \{$\varphi_t$\} for $t\in [0, 1]$ such that $\varphi_1(X_i)=X_i'$ for $i=1, 2, \ldots , n$.
	Two multisections above are {\it diffeomorphic} if there is a self diffeomorphism $\psi : X \to X$ of $X$ such that $\psi(X_i)=X_i'$ for $i=1, 2, \ldots , n$.
\end{definition}

If two multisections have different cross-sections up to diffeomorphism, they are not diffeomorphic to each other.

\begin{definition}\label{def:bisec}
	Let $k_1$, $k_2$, and $g$ be non-negative integers such that $k_i\leq g$ for $i=1, 2$ and $X$ a 4-manifold with connected boundary.
	A $(g; k_1, k_2)$-{\it bisection} of $X$ is a decomposition $X=X_1\cup X_2$ such that
	\begin{itemize}
		\item $X_i\cong \natural^{k_i} S^1\times B^3$,
		\item $\partial X_1=H_1\cup H_2$ and $\partial X_2=H_2\cup H_3$ are genus $g$ Heegaard splittings of $\#^{k_1}S^1\times S^2$ and $\#^{k_2}S^1\times S^2$ respectively.
		\item $\partial X = H_1\cup H_3$ is a genus $g$ Heegaard splitting of $\partial X$.
	\end{itemize}
	We call $g$ the \textit{genus} of the bisection $\mathcal{B}$ and it is denoted by $bg(\mathcal{B})$, and we also define the \textit{bisection genus} of $X$ by 
	\[
		bg(X)= min \{bg(\mathcal{B})\mid \mbox{$\mathcal{B}$ is a bisection of $X$}\}.
	\]
\end{definition}


We give the bisection genus of a certain 4-manifold in Section 4. 

Islambouli and Naylor showed that any 2-handlebody, that is, a 4-manifold that admits a handle decomposition consisting of 0-, 1-, and 2-handles, admits a bisection in \cite{IN}. 
Note that a bisected closed 4-manifold is diffeomorphic to $\#_{k}S^1 \times S^3$ for some $k \ge 0$ by Laudenbach-Po\'{e}naru's theorem \cite{LP}. Thus, it is essential that we consider a 4-manifold with connected boundary in Definition \ref{def:bisec}.

The decomposition of a bisection can be described by a bisection diagram.

\begin{definition}
A $(g; k_1, k_2)$-{\it bisection diagram} is an orderd 4-tuple $(\Sigma_g, \alpha, \beta, \gamma)$ such that $(\Sigma_g, \alpha, \beta)$ is a Heegaard diagram of $\#_{k_1}S^1 \times S^2$ and $(\Sigma_g, \beta, \gamma)$ is a Heegaard diagram of $\#_{k_2}S^1 \times S^2$.
\end{definition}

Note that $(\Sigma_g, \gamma, \alpha)$ is a Heegaard diagram of the boundary of the 4-manifold corresponding to the bisection diagram. One can depict multisection (resp. bisection) diagrams from multisections (resp. bisections). See \cite{IN} for details.

To consider the classification problems of bisections, we introduce a notion of equivalence classes of bisections.

\begin{definition}
	Two bisections $X_1\cup X_2$ and $X_1'\cup X_2'$ of $X$ are {\it isotopic} if there is an amibient isotopy $\{ \varphi_t \}$ for $t\in [0, 1]$ such that $\varphi_1(X_i)=X_i'$ for $i=1, 2$.
	Two bisections above are {\it diffeomorphic} if there is a self diffeomorphism $\psi : X \to X$ of $X$ such that $\psi(X_i)=X_i'$ for $i=1, 2$.
\end{definition}

For the above notion, some examples of non-diffeomorphic 4-sections have been found in \cite{IN}. We recall this briefly. 

First of all, we recall Mazur manifolds introduced in \cite{M}. The construction of Mazur manifolds is as follows:
Let $W_1$ be a 1-handlebody diffeomorphic to $S^1\times B^3$ and $W_2$ a 2-handle attached to $W_1$.
Suppose that the attaching circle of $W_2$ represents the generator of $H_1(\partial W_1)$.
Then, the union $W_1\cup W_2$ is called a Mazur manifold. 

Mazur showed that manifolds obtained by the above construction are contractible and their boundaries are homology 3-spheres.
A significant feature of this manifold is that the double of this manifold is diffeomorphic to $S^4$.

\begin{remark}[{\cite[Section 6.3]{IN}}]
It is shown that there is an exotic pair of Mazur manifolds \cite{HMP}. 
Since the double of a Mazur manifold is $S^4$, we can construct a 4-section of $S^4$ by doubling a bisection of a Mazur manifold. 
The 4-sections obtained by this construction for these exotic pairs are not diffeomorphic. 
Thus, 4-dimensional Waldhausen's conjecture is not correct for 4-sections of $S^4$. 
\end{remark}

This construction of non-diffeomorphic 4-sections relies on an exotic pair of Mazur manifolds.
In Section \ref{sec:4-sec}, we construct non-isotopic 4-sections of 4-manifolds which are not diffeomorphic to $S^4$ in a different way. 

\section{Nielsen equivalence and classification of bisections}\label{sec:nielsen}

In this section, we recall a notion of Nielsen equivalence. The Nielsen equivalence is an equivalence relation among generating sets of finitely generated groups. We use the Nielsen equivalence to construct mutually non-isotopic bisections.

\begin{definition}
	Let $X=(x_1, \ldots , x_n)$ be a basis of the free group $F_n$ of rank $n$ and $G$ a finitely generated group with two generating sets $(a_1, \ldots , a_n)$ and $(b_1, \ldots, b_n)$.
	We say that $(a_1, \ldots , a_n)$ and $(b_1, \ldots, b_n)$ are {\it Nielsen equivalent} if there exist another basis $(y_1, \ldots , y_n)$ of $F_n$ and a homomorphism $\varphi: F_n \to G$ such that $\varphi(x_i)=a_i$ and $\varphi(y_i)=b_i$.
\end{definition}

The Nielsen equivalence can be reformulated by a result of Nielsen \cite{N} as follows:

\begin{definition}[reformulated definition of Nielsen equivalence]
	Let $G$ be a finitely generated group with generating sets $(a_1, \ldots, a_n)$, $(b_1, \ldots, b_n)$ and $w_i$ a word of $b_i$ represented by $(a_1, \ldots, a_n)$.
	We say $(a_1, \ldots, a_n)$ and $(b_1, \ldots, b_n)$ are {\it Nielsen equivalent} if $(w_1, \ldots , w_n)$ can be obtained from $(a_1, \ldots, a_n)$ by applying the following operations.
	\begin{enumerate}
		\item Swap $a_1$ and $a_2$.
		\item Permute $(a_1, a_2, \ldots, a_n)$ to $(a_2, a_3 \ldots, a_n, a_1)$.
		\item Replace $a_1$ with $a_1^{-1}$.
		\item Replace $a_1$ with $a_1a_2$.
	\end{enumerate}
\end{definition}
	
	A spine of a 1-handlebody is a graph which is a deformation retract of the 1-handlebody. 
	In this paper, following \cite{I}, we suppose any spine of a 1-handlebody has only one vertex.
	
	Let $H_g$ be a 4-dimensional 1-handlebody embedded in $X$ and $S$ a spine of $H_g$ with a vertex $v$ and edges $s_1, \ldots, s_g$.
	Suppose that there is an epimorphism $i_\ast : \pi(H_g) \to \pi(X)$.
	Then, we denote by $\mathcal{N}(H_g)$ the Nielsen class of $g$ generators of $\pi_1(X, x_0)$ given by
	\[
		(i_\ast(p\cdot s_1\cdot p^{-1}), \ldots, i_\ast(p\cdot s_g\cdot p^{-1})),
	\]
	where $p$ is a path from $x_0$ to $v$.
	
	Let $f$ be a self-diffeomorphism of $X$ which fixes a base point $x_0$. 
	Then we denote by $f(\mathcal{N}(H_g))$ the Nielsen class which is obtained by applying $f_\ast$, where $f_\ast$ is a self-isomorphism of $\pi_1(X)$ induced from $f$.


The following lemma shows that the Nielsen equivalence can be used to detect different bisections.
This is a similar result to the one for trisections \cite[Proposition 4.5]{I}.

\begin{lemma}\label{lem:funddam1}
	Let $X_1\cup X_2$ and $X_1'\cup X_2'$ be two $(g, k)$ bisections of $X$. If these bisections are isotopic, then $\mathcal{N}(X_i)=\mathcal{N}(X_i')$. If the bisections are diffeomorphic by some diffeomorphism $f$, then $f(\mathcal{N}(X_i))=\mathcal{N}(X_i')$ for $i=1, 2$.
\end{lemma}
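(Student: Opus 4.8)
The plan is to adapt the argument for trisections in \cite[Proposition 4.5]{I} to the bisection setting, the key point being that the Nielsen class $\mathcal{N}(H_g)$ attached to a $4$-dimensional $1$-handlebody $H_g \subset X$ (with an epimorphism on $\pi_1$) depends only on the smooth isotopy class of the embedding $H_g \hookrightarrow X$, and transforms naturally under self-diffeomorphisms. Once this is established, the lemma is essentially immediate: an ambient isotopy $\{\varphi_t\}$ with $\varphi_1(X_i)=X_i'$ is in particular a smooth isotopy of the embedded handlebody $X_i$ onto $X_i'$, so $\mathcal{N}(X_i)=\mathcal{N}(X_i')$; and a self-diffeomorphism $f$ with $f(X_i)=X_i'$ carries a spine of $X_i$ to a spine of $X_i'$, so the induced $f_\ast$ on $\pi_1(X)$ sends $\mathcal{N}(X_i)$ to $\mathcal{N}(X_i')$.

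First I would verify that $\mathcal{N}(X_i)$ is well defined, i.e.\ independent of the auxiliary choices entering its definition: the choice of spine $S$ with its single vertex $v$ and edges $s_1,\dots,s_g$, and the choice of path $p$ from the basepoint $x_0$ to $v$. Two spines of a fixed $1$-handlebody are related by a sequence of moves (slides and handle reorderings together with changes of orientation of edges), and each such move changes the ordered generating set $(i_\ast(p\cdot s_1 \cdot p^{-1}),\dots)$ exactly by one of the four Nielsen operations (1)--(4); hence the Nielsen class is unchanged. Changing the path $p$ to another path $p'$ replaces each generator by its conjugate by the loop $p' \cdot p^{-1}$, and simultaneous conjugation of all generators by a fixed element also preserves the Nielsen class. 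This is the bookkeeping content of the statement and is where most of the (routine) work sits.

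Next I would record the naturality under self-diffeomorphisms. If $f:X\to X$ fixes $x_0$ and $f(X_i)=X_i'$, then $f$ maps any spine $S$ of $X_i$ (with vertex $v$, edges $s_j$) to a spine $f(S)$ of $X_i'$ (with vertex $f(v)$, edges $f(s_j)$), and maps the path $p$ from $x_0$ to $v$ to a path $f(p)$ from $x_0$ to $f(v)$. Applying $f_\ast$ to the generating tuple defining $\mathcal{N}(X_i)$ therefore produces exactly the generating tuple defining $\mathcal{N}(X_i')$ with respect to these images; by the well-definedness just established this equals $\mathcal{N}(X_i')$. For the isotopic case one specialises to $f=\varphi_1$, which is isotopic to the identity, so $f_\ast = \id$ on $\pi_1(X,x_0)$ (after adjusting by the isotopy so that $x_0$ is fixed), giving $\mathcal{N}(X_i)=\mathcal{N}(X_i')$.

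The main obstacle I anticipate is purely a matter of basepoint care rather than substance: an ambient isotopy need not fix $x_0$, so one must either arrange the isotopy to fix $x_0$ (possible since $X$ is connected, by absorbing a path-translation), or track the induced identification of $\pi_1(X,x_0)$ with $\pi_1(X,\varphi_1(x_0))$ along $\varphi_t(x_0)$ and observe that it introduces only an overall conjugation, which is absorbed by the Nielsen class. A second, minor point is checking that the hypothesis ensuring $\mathcal{N}(X_i)$ is defined — namely that $i_\ast:\pi_1(X_i)\to\pi_1(X)$ is surjective — actually holds for the pieces of a $(g,k)$-bisection; this follows from a van Kampen argument using that $\partial X = H_2\cup H_3$ and that the complementary piece is built from a handlebody, so that the generators of $\pi_1(X_i)$ already normally generate, and in fact surject. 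With these two points dispatched the proof is a direct transcription of Islambouli's argument.
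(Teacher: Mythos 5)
Your proposal is correct and follows essentially the same route as the paper: the paper's proof likewise reduces to the ambient isotopy of spines and the naturality of $f_\ast$, simply citing Lemmas~3.1 and~3.2 of Islambouli's paper~\cite{I} for the well-definedness of the Nielsen class (independence of spine and basepoint) that you spell out directly. Your added checks --- basepoint care via absorbing a conjugation into the Nielsen class, and surjectivity of $\pi_1(X_i)\to\pi_1(X)$ by van~Kampen using that $\pi_1(H_2)\twoheadrightarrow\pi_1(X_i)$ --- are exactly the right pieces of routine verification that the paper leaves implicit.
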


\begin{proof}
	If $X_1\cup X_2$ and $X_1'\cup X_2'$ are isotopic, then spines of $X_i$ and $X_i'$ are isotopic to each other in $X$ for $i=1, 2$. By Lemma 3.2 and 3.1 in \cite{I}, the generators of $\pi_1(X)$ coming from spines of $X_i$ and $X_i'$ are Nielsen equivalent. If they are diffeomorphic, there is a diffeomorphism $f$ which sends $X_i$ to $X_i'$. Then we send the generator of $\pi_1(X)$ coming from the spine of $X_i$ by the self-isomorphism $f_\ast$ of $\pi_1(X)$ induced from $f$. Then this generator is Nielsen equivalent to the generator coming from $X_i'$ by Lemma 3.2 and 3.1 in \cite{I}.
\end{proof}

For multisections, we also obtain the following:
\begin{lemma}\label{lem:fund2}
	Let $X_1\cup X_2\cup \cdots \cup X_n$ and $X_1'\cup X_2'\cup \cdots \cup X_n'$ be two $(g, k)$ multisections of $X$. If these multisections are isotopic, then $\mathcal{N}(X_i)=\mathcal{N}(X_i')$. If the multisections are diffeomorphic by some diffeomorphism $f$, then $f(\mathcal{N}(X_i))=\mathcal{N}(X_i')$ for $i=1, 2$.
\end{lemma}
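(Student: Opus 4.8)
The plan is to run the proof of Lemma~\ref{lem:funddam1} essentially verbatim, since the only structural feature of a bisection used there is that each piece $X_i$ is a $1$-handlebody whose one-vertex spine determines a generating tuple of $\pi_1(X)$ up to Nielsen equivalence and up to the choice of connecting path. The one point that genuinely needs to be re-checked for an $n$-section is that the Nielsen class $\mathcal{N}(X_i)$ is defined at all, i.e.\ that the inclusion induces an epimorphism $i_\ast\colon\pi_1(X_i)\to\pi_1(X)$; once this is in place, the rest is just transporting spines by the isotopy (resp. the diffeomorphism) and quoting Lemmas~3.1 and~3.2 of~\cite{I}.

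So the first step is the surjectivity check. Put $Y=\overline{X\setminus X_i}=X_{i+1}\cup X_{i+2}\cup\cdots\cup X_{i-1}$ (indices mod $n$), which is connected and meets $X_i$ along the connected $3$-manifold $\partial X_i=H_{i-1,i}\cup H_{i,i+1}$. Building $Y$ one handlebody at a time, each successive gluing is along a $3$-dimensional handlebody $H_{j-1,j}\cong\natural^{g}(S^1\times B^2)$ which is one side of the genus-$g$ Heegaard splitting $\partial X_j=H_{j-1,j}\cup H_{j,j+1}$ of $\#^{k_j}(S^1\times S^2)$; hence $\pi_1(H_{j-1,j})\to\pi_1(\partial X_j)\to\pi_1(X_j)$ is surjective. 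An easy induction using van Kampen then shows that $\pi_1(Y)$ is generated by the image of $\pi_1(H_{i,i+1})\subseteq\pi_1(\partial X_i)$, so $\pi_1(\partial X_i)\to\pi_1(Y)$ is onto, and applying van Kampen to $X=X_i\cup_{\partial X_i}Y$ gives that $i_\ast\colon\pi_1(X_i)\to\pi_1(X)$ is onto. (For $n=2$ this is exactly the bisection fact underlying Lemma~\ref{lem:funddam1}.) Thus $\mathcal{N}(X_i)$ is well defined for every $i$.

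With that settled, the argument matches the bisection case. If $\{\varphi_t\}$ is an ambient isotopy with $\varphi_1(X_i)=X_i'$, then $\varphi_1$ carries a one-vertex spine $S_i$ of $X_i$ to a one-vertex spine of $X_i'$; since $\varphi_1\simeq\id_X$, this spine is isotopic in $X$ both to $S_i$ and to a chosen spine $S_i'$ of $X_i'$, so $S_i$ and $S_i'$ are isotopic in $X$, and by Lemmas~3.1 and~3.2 of~\cite{I} the associated generating tuples of $\pi_1(X,x_0)$ are Nielsen equivalent, i.e.\ $\mathcal{N}(X_i)=\mathcal{N}(X_i')$. If instead a diffeomorphism $f\colon X\to X$ (which we may arrange to fix the basepoint $x_0$) satisfies $f(X_i)=X_i'$, then $f(S_i)$ is a spine of $X_i'$, and pushing forward by the induced isomorphism $f_\ast$ of $\pi_1(X)$ and again invoking Lemmas~3.1 and~3.2 of~\cite{I} gives $f(\mathcal{N}(X_i))=\mathcal{N}(X_i')$. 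The only place that requires any real care is the epimorphism verification in the second paragraph; everything afterward is the same bookkeeping with spines, paths, and conjugations as in Islambouli's trisection argument, so I do not expect any further obstacle.
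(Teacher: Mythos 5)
Your proposal is correct and follows essentially the same route as the paper's own proof, which simply transports spines by the isotopy (resp. diffeomorphism) and quotes Lemmas~3.1 and~3.2 of~\cite{I}, word for word as in the bisection case. The extra paragraph you add verifying that the inclusion $\pi_1(X_i)\to\pi_1(X)$ is surjective (so that $\mathcal{N}(X_i)$ is actually defined) is a sound and useful supplement that the paper leaves implicit, but it does not change the underlying argument.
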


\begin{proof}
	If $X_1\cup X_2\cup \cdots \cup X_n$ and $X_1'\cup X_2'\cup \cdots \cup X_n'$ are isotopic, then spines of $X_i$ and $X_i'$ are isotopic to each other in $X$ for $i=1, 2, \ldots, n$. By Lemma 3.2 and 3.1 in \cite{I}, the generators of $\pi_1(X)$ coming from spines of $X_i$ and $X_i'$ are Nielsen equivalent. If they are diffeomorphic, there is a diffeomorphism $f$ which sends $X_i$ to $X_i'$. Then we send the generator of $\pi_1(X)$ coming from the spine of $X_i$ by the self-isomorphism $f_\ast$ of $\pi_1(X)$ induced from $f$. Then this generator is Nielsen equivalent to the generator coming from $X_i'$ by Lemma 3.2 and 3.1 in \cite{I}.
\end{proof}

\section{a bisection obtained from a Heegaard splitting}\label{sec:bisect_from_HS}
In this section, we construct a bisection from a Heegaard splitting and give infinite  families of 4-manifolds with non-isotopic bisections.
Firstly, we construct a bisection from a Heegaard splitting.

Let $M$ be an oriented closed 3-manifold and $V_1\cup V_2$ a genus $g$ Heegaard splitting of $M$.
We take a point $p$ on the Heegaard surface $\Sigma_g$ of $V_1\cup V_2$.
Let $N(p)$ be a regular neighborhood of $p$ in $M$ and consider the 4-manifold $X=(M\setminus N(p))\times I$, where $I$ is the interval $[0, 1]$.

We define a decomposition of $X=X_1 \cup X_2$ as follows:
\begin{itemize}
	\item $X_1=(V_1\setminus N(p))\times I$, and 
	\item $X_2=(V_2\setminus N(p))\times I$.
\end{itemize}
Figure \ref{bisection1} shows a schematic picture of this decomposition.
	\begin{figure}[htbp]
		\centering
		\includegraphics[scale=0.6]{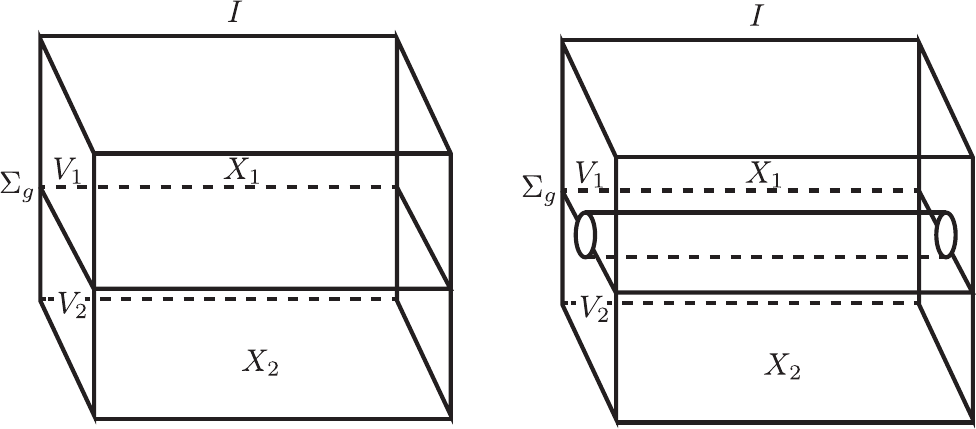}
		\caption{This figure is a schematic picture of the bisection obtained by taking a product of a punctured Heegaard splitting and the interval.}
		\label{bisection1}
	\end{figure}
\begin{proposition}\label{prop:construction}
	The decomposition $X=X_1\cup X_2$ is a genus $2g$ bisection of $X$.
\end{proposition}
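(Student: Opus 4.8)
The plan is to verify the three bullet points in Definition~\ref{def:bisec} directly from the product structure $X = (M \setminus N(p)) \times I$. The key observation is that a Heegaard handlebody $V_i$ has the homotopy type of a wedge of $g$ circles, so $V_i \setminus N(p)$ — a genus $g$ handlebody with one boundary puncture — is still homotopy equivalent to the same wedge; taking the product with $I$ does not change this. More precisely, $V_i \setminus N(p)$ is itself diffeomorphic to $V_i$ (removing a boundary-collar neighborhood of a point from a handlebody gives back a handlebody of the same genus), hence $X_i = (V_i \setminus N(p)) \times I \cong V_i \times I \cong \natural^g(S^1 \times B^2) \times I \cong \natural^g(S^1 \times B^3)$. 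This establishes the first bullet with $k_1 = k_2 = g$; note in particular that $2g$ is the genus and $k_i = g$, matching the $(2g, g)$-type mentioned in Theorem~\ref{thm:bisec}.

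Next I would identify the three pieces $H_1, H_2, H_3$ of the central-surface decomposition. The natural candidates are
\[
 H_1 = (V_1 \setminus N(p)) \times \{0\} \;\cup\; (\Sigma_g \setminus N(p)) \times I \;\cup\; (V_1 \setminus N(p)) \times \{1\},
\]
wait — more carefully, since a bisection has $\partial X_1 = H_1 \cup H_2$ with $H_1, H_2$ genus $2g$ handlebodies, one should take $H_2$ to be the ``side wall'' together with one of the lids. Concretely, the boundary of $X_1 = (V_1 \setminus N(p)) \times I$ consists of $(V_1 \setminus N(p)) \times \{0,1\}$ and $\partial(V_1 \setminus N(p)) \times I$. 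The last piece $\partial(V_1\setminus N(p)) = (\Sigma_g \setminus N(p)) \cup (V_1 \cap \partial N(p))$, so the side wall of $X_1$ is $\big((\Sigma_g\setminus N(p)) \cup (\text{half-disk})\big)\times I$. One then checks that a genus $2g$ Heegaard surface appears as $(\Sigma_g \setminus N(p)) \times \{1/2\}$ capped appropriately — indeed the double of a genus $g$ surface with one boundary component along that boundary is a genus $2g$ closed surface — and that the two sides of this surface inside $\partial X_i$ are genus $2g$ handlebodies. I would verify that $\partial X_1 = H_1 \cup H_2$ is a Heegaard splitting of $\#^g S^1\times S^2$: since $X_1 \cong \natural^g(S^1\times B^3)$ we have $\partial X_1 \cong \#^g S^1 \times S^2$, and the splitting is genus $2g$ because each side is obtained by doubling a genus $g$ handlebody. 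The analogous statements hold for $\partial X_2$, and $\partial X = (M \setminus N(p)) \times \{0,1\} \cup \partial(M\setminus N(p))\times I$, whose induced splitting $H_2 \cup H_3$ is genus $2g$ by the same doubling argument applied to the Heegaard surface $\Sigma_g$ of $M$.

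The step I expect to be the main obstacle is the bookkeeping of how the puncture $N(p)$ interacts with the various pieces, and in particular checking carefully that the relevant surfaces really have genus exactly $2g$ rather than some other value, and that the decompositions of $\partial X_i$ are genuine Heegaard splittings (i.e. that each ``half'' is a handlebody, not merely a compression body). The cleanest way to handle this is to argue that $(V_i \setminus N(p)) \times I$, as a $4$-manifold with corners smoothed, is the result of attaching $g$ $1$-handles to $B^4$, and to track the attaching regions through the product structure; alternatively, one can pass to bisection diagrams and observe that the diagram $(\Sigma_{2g}, \alpha, \beta, \gamma)$ obtained here is precisely the ``doubled'' Heegaard diagram of $M \setminus N(p)$, which by construction satisfies the defining conditions of a $(2g; g, g)$-bisection diagram. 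Everything else is a routine verification that the pieces fit together along the central surface $(\Sigma_g \setminus N(p)) \times \{1/2\}$, suitably closed up.
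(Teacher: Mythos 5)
Your proposal takes essentially the same approach as the paper — check the three bullet points of the definition directly from the product structure — and you correctly identify that each $X_i$ is a genus $g$ four-dimensional $1$-handlebody (so $k_1 = k_2 = g$) and that the relevant Heegaard genus is $2g$. The gap is precisely the step you flag as the obstacle: you never arrive at a correct decomposition of $\partial X_i$ into the pieces $H_1, H_2, H_3$. Your first candidate for $H_1$ (the two lids together with the entire wall $(\Sigma_g\setminus N(p))\times I$) is all of $\partial X_1$ minus a $3$-ball $(V_1\cap\partial N(p))\times I$, which is a punctured $\#^g(S^1\times S^2)$, not a handlebody. Your proposed fix, taking $H_2$ to be ``the side wall together with one of the lids,'' also fails: that $H_2$ deformation retracts to $V_1\setminus N(p)$ and has genus $g$, and its complement in $\partial X_1$ is one lid plus a $3$-ball, again genus $g$, so neither piece has genus $2g$.

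The correct identification, which is what the paper uses, is simpler than either of your guesses. Take $H_2 = X_1\cap X_2 = (\Sigma_g\setminus N(p))\times I$, the vertical wall alone; this is a genus $2g$ handlebody because $\Sigma_g\setminus N(p)$ has the homotopy type of a wedge of $2g$ circles. Then set $H_1 = \overline{\partial X_1\setminus H_2}$, which is the two lids $(V_1\setminus N(p))\times\{0\}$ and $(V_1\setminus N(p))\times\{1\}$ joined by the strip $(V_1\cap\partial N(p))\times I$; this is the boundary connected sum of two genus $g$ handlebodies, hence a genus $2g$ handlebody. Define $H_3$ symmetrically from $X_2$. With this in hand, the common surface $H_1\cap H_2$ (equal to $H_2\cap H_3$ and to $H_1\cap H_3$) is two copies of $\Sigma_g\setminus N(p)$ joined along an annulus, a closed genus $2g$ surface, and $\partial X = H_1\cup H_3$. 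So the shortcoming is not the overall strategy but the failure to pin down the $H_i$ precisely; once that is done, the verification is as routine as you anticipated and your alternative via bisection diagrams is unnecessary.
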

\begin{proof}
	To show the statement, we check the following:
	\begin{itemize}
		\item Each of $X_i$ is a 1-handlebody for $i=1, 2$.
		\item Each of $\partial X_i = (X_1\cap X_2)\cup (X_i\setminus X_1\cap X_2)$ is a Heegaard splitting of genus $2g$ for $i=1, 2$.
		\item The decomposition $\partial X = (X_1\setminus X_1\cap X_2)\cup (X_2\setminus X_1\cap X_2)$ is a Heegaard splitting of genus $2g$.
	\end{itemize}
	
	Since a product of a 3-dimensional handlebody and the interval is a 4-dimensional handlebody, $X_i$ is a 4-dimensional handlebody. Furthermore, since the genus of the 3-dimensional handlebody $V_i\setminus N(p)$ is $g$, $X_i$ is diffeomorphic to $\natural^{g} S^1\times B^3$. 
	
	By the construction, $X_1\cap X_2 = (\Sigma_g \setminus N(p)) \times I$, and $(\Sigma_g \setminus N(p)) \times I$ is a genus $2g$ 3-dimensional handlebody since $\Sigma_g \setminus N(p)$ is a union of a $0$-handle and $2g$ $1$-handles. 
	The intersection $X_1\cap X_2$ is denoted by $H_2$.
	
	By the construction, the following holds:
	\[
		\overline{X_i \setminus H_2} = ((V_i \setminus N(p))\times \{0\}) \cup ((V_i\cap N(p))\times I) \cup ((V_i \setminus N(p))\times \{1\}).
	\]
	Here, $((V_i\cap N(p))\times I)$ is a 3-dimensional 1-handle connecting $((V_i \setminus N(p))\times \{0\})$ and $((V_i \setminus N(p))\times \{1\})$. Hence, this is a genus $2g$ 3-dimensional handlebody. The sets $\overline{X_1 \setminus H_2}$ and $\overline{X_2 \setminus H_2}$ are denoted by $H_1$ and $H_3$, respectively.
	To prove that $\partial X_i = (X_1\cap X_2)\cup (X_i\setminus X_1\cap X_2)$ is a Heegaard splitting, it suffices to show that $H_i\cap H_2$ is a genus $2g$ closed surface for $i=1, 3$. This follows from
	\[
		H_2\cap H_i = ((\Sigma_g \setminus N(p))\times \{0\}) \cup ((\Sigma_g \setminus N(p))\times I) \cup ((\Sigma_g \setminus N(p))\times \{1\}).
	\]
	This is a genus $2g$ closed surface. Hence, $H_1\cup H_2$ and $H_3\cup H_2$ are genus $2g$ Heegaard splittings of $\partial X_1$ and $\partial X_2$, respectively. 
	
	Finally, we check that $H_1\cup H_3$ is Heegaard splitting of $\partial X$.
	Since $X=X_1\cup X_2$, $\partial X= (\partial X_1\cup \partial X_2)\setminus (\partial X_1\cap \partial X_2)$.
	Hence $\partial X = H_1\cup H_3$.
	The intersection $H_1\cap H_3$ is the same as the intersection $H_2\cap H_i$.
\end{proof}

We can algorithmically depict the bisection diagram with respect to the above bisection as follows:

\begin{enumerate}
	\item Since the boundary of the bisection is the connected sum of two copies of the given Heegaard splitting $V_1\cup V_2$, the boundary of meridian disk systems of $H_1$ (resp. $H_3$) corresponds to the boundary of meridian disk system of $V_1\natural (-V_1)$ (resp. $V_2\natural (-V_2)$). That is, the boundary of meridian disk systems of $H_1$ and $H_3$ constitutes a Heegaard diagram which is a connected sum of the Heegaard diagram of $M$ and $-M$.
	\item Since $H_2$ is the product of a once punctured Heegaard surface of $V_1\cup V_2$ and the interval $I$, we can take a meridian disk system of $H_2$ as the product of the cocore of 1-handles of $\Sigma\setminus N(p)$ and $I$.
	\item Lastly, we depict the boundary of each meridian disk of $H_2$ on the central surface.
\end{enumerate}

\begin{example}
Figure \ref{bidiag1} represents the bisection diagram with respect to the above bisection of $L(2, 1)^\circ \times I$. More generally, let $\alpha$ and $\beta$ be simple closed curves in Figure \ref{bidiag1}. Also, let $\gamma_1$ be a simple closed curve that only intersects with the left $\alpha$ curve (we call it $\alpha_1$) $p$ times and the longitude curve dual to $\alpha_1$ $q$ times, and $\gamma_2$ the mirror image of $\gamma_1$. Then, $(\Sigma, \alpha, \beta, \gamma)$ is the bisection diagram with respect to the above bisection of $L(p, q)^\circ \times I$.
\end{example}

\begin{figure}[h]
		\centering
		\includegraphics[scale=1]{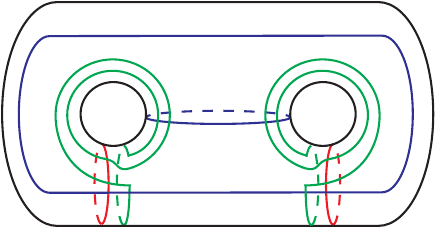}
		\caption{This figure is the bisection diagram with respect to the above bisection of $L(2, 1)^\circ \times I$.}
		\label{bidiag1}
	\end{figure}

As a consequence of Proposition \ref{prop:construction}, we obtain bisections with minimal genus.

\begin{corollary}\label{cor_1}
	Let $M$ be a closed $3$-manifold.
	Then, the bisection genus of $M^\circ \times I$ is $2g(M)$, where $g(M)$ is the Heegaard genus of $M$.
	Furthermore,  for any $n\geq 2$, there are infinitely many compact smooth 4-manifolds that each bisection genus  of them is $2n$.
\end{corollary}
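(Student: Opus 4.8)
The plan is to prove the two assertions in turn, treating the equality $bg(M^\circ\times I)=2g(M)$ first. For the inequality $bg(M^\circ\times I)\le 2g(M)$ I would simply apply Proposition~\ref{prop:construction} to a minimal-genus Heegaard splitting $V_1\cup V_2$ of $M$, of genus $g(M)$; this produces a bisection of $M^\circ\times I$ of genus $2g(M)$, so the bisection genus is at most $2g(M)$.

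For the reverse inequality, the key point is that any bisection controls the Heegaard genus of the boundary. Indeed, by Definition~\ref{def:bisec} a bisection $\mathcal{B}$ of a $4$-manifold $X$ induces a genus-$bg(\mathcal{B})$ Heegaard splitting of $\partial X$, so $bg(\mathcal{B})\ge g(\partial X)$ for every bisection $\mathcal{B}$, and hence $bg(X)\ge g(\partial X)$. It therefore suffices to identify $\partial(M^\circ\times I)$ and compute its Heegaard genus. Writing $M^\circ=M\setminus\mathring B^3$ so that $\partial M^\circ\cong S^2$, we have $\partial(M^\circ\times I)=(M^\circ\times\{0\})\cup(\partial M^\circ\times I)\cup(M^\circ\times\{1\})$; absorbing the collar $\partial M^\circ\times I\cong S^2\times I$ exhibits $\partial(M^\circ\times I)$ as the double of $M^\circ$, which is diffeomorphic to $M\#\overline M$ (consistent with the description of the boundary of the bisection diagram given just before the statement, where it appears as the connected sum of the Heegaard diagrams of $M$ and $-M$). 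By Haken's additivity of Heegaard genus under connected sum, $g(M\#\overline M)=g(M)+g(\overline M)=2g(M)$, and combining this with the two inequalities above gives $bg(M^\circ\times I)=2g(M)$.

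For the second assertion, fix $n\ge 2$. I would exhibit infinitely many pairwise non-diffeomorphic closed $3$-manifolds of Heegaard genus exactly $n$: for instance $M_p=\#^n L(p,1)$ for $p\ge 2$, which has Heegaard genus $n$ (again by Haken additivity, since $g(L(p,1))=1$) and fundamental group $(\Z/p)^n$, so the $M_p$ are pairwise non-homeomorphic. The $4$-manifolds $M_p^\circ\times I$ are then pairwise non-diffeomorphic, since $\pi_1(M_p^\circ\times I)\cong\pi_1(M_p)$ already distinguishes them, and by the first part each has bisection genus $2n$.

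I expect the whole argument to be short once the right input is available; the only non-elementary ingredient is Haken's theorem on additivity of Heegaard genus, on which the lower bound genuinely depends, while the identification $\partial(M^\circ\times I)\cong M\#\overline M$ and the construction of the example family are routine. The one place to be slightly careful is the orientation in that identification (one copy of $M^\circ$ appears with reversed orientation, yielding $M\#\overline M$ rather than $M\#M$), but since Heegaard genus is unchanged by orientation reversal this does not affect the genus computation.
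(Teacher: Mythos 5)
Your proof is correct and takes essentially the same route as the paper: upper bound from Proposition~\ref{prop:construction}, lower bound from the observation that a bisection induces a Heegaard splitting of $\partial X \cong M\#\overline{M}$ together with additivity of Heegaard genus, and the family $\#^n L(p,q)$ for the second claim. (Only a trivial slip: $\pi_1(\#^n L(p,1))$ is the free product $\Z/p * \cdots * \Z/p$, not the direct product $(\Z/p)^n$; but since its abelianization is $(\Z/p)^n$, your conclusion that the $M_p$ are pairwise non-homeomorphic stands.)
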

	\begin{proof}
		The boundary of $M^\circ \times I$ is diffeomorphic to $M\# -M$.
		It is well-known that the Heegaard genus is additive under connected sum.
		Hence,  $g(M\# -M)=2g(M)$.
		Since the bisection genus of a 4-manifold is bounded by the Heegaard genus of its boundary from below, we obtain
		\[
			2g(M)\leq bg(M^\circ \times I).
		\]
		By the construction above, we can construct the genus $2g(M)$ bisection of $M^\circ \times I$ from a Heegaard splitting of $M$ with genus $g(M)$.
		
		By Proposition \ref{prop:construction}, we can construct genus $2n$ bisections of $\#^nL(p, q)^\circ \times I$ from a genus $n$ Heegaard splitting of $\#^n L(p, q)$ for any $p, q$.
		Since the Heegaard genus of $\#^n L(p, q)$ is $n$,  there are infinitely many compact smooth 4-manifolds that each bisection genus of them is $2n$ for $n\geq 2$.
	\end{proof}

	The bisection obtained from a Heegaard splitting $V_1\cup _{\Sigma} V_2$ is denoted by $B(\Sigma)$.
	Let $M$ be a 3-manifold with Heegaard splitting $V_1\cup V_2$.
	Since $\pi_1(M)$ is isomorphic to $\pi_1(M^\circ)$, $\pi_1(M^\circ\times I)$ is isomorphic to $\pi_1(M)$.
	Furthermore, the following lemma is used to construct non-isotopic bisections.
	Let $M^\circ$ be a 3-manifold obtained by removing an open ball from $M$.
	
\begin{lemma}\label{lem:spine}
	Let $V_1\cup_{\Sigma} V_2$ be a genus $g$ Heegaard splitting of $M$, $S_1$ a spine of $V_1$ and $\langle s_1, \dots, s_g\mid R\rangle$ a presentation of $\pi_1(M)$, where $s_i$ is a homotopy class of  loops in $S_1$.
	Then, $S_1\times \{1/2\}$ induces the same presentation for $\pi_1(X)$ where $X=M^{\circ} \times I$.
\end{lemma}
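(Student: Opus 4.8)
The plan is to compare the fundamental group of $B(\Sigma)=M^\circ\times I$ as seen through the spine $S_1\times\{1/2\}\subset X_1$ with $\pi_1(M)$ as seen through the original spine $S_1\subset V_1$, using only van Kampen's theorem and deformation retractions. First I would record the relevant identifications. Since $M^\circ\times I$ deformation retracts onto $M^\circ\times\{1/2\}\cong M^\circ$, we get $\pi_1(B(\Sigma))=\pi_1(M^\circ\times I)\cong\pi_1(M^\circ)$; and since $M=M^\circ\cup N(p)$ with $N(p)\cong B^3$ and $M^\circ\cap N(p)=\partial N(p)\cong S^2$ both simply connected, van Kampen gives $\pi_1(M^\circ)\cong\pi_1(M)$, the isomorphism being induced by the inclusion $M^\circ\hookrightarrow M$. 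I would fix the basepoint on $S_1$, chosen after an isotopy of the spine to lie in the interior of $V_1$ and away from $p$, so that $S_1\subset V_1\setminus N(p)$.

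The main point is then the commutative square of inclusions
\[
\begin{array}{ccc}
V_1\setminus N(p) & \hookrightarrow & M^\circ \\
\downarrow & & \downarrow \\
V_1 & \hookrightarrow & M
\end{array}
\]
on which, after applying $\pi_1$, both vertical maps are isomorphisms: for the right one by the van Kampen argument above, and for the left one because $V_1\cap N(p)$ is a half-ball whose frontier in $V_1$ is a $2$-disk, so that $V_1=(V_1\setminus N(p))\cup(V_1\cap N(p))$ has simply connected intersection piece. Hence the top arrow is identified, via these isomorphisms, with $\pi_1(V_1)\to\pi_1(M)$, which by hypothesis is the surjection of $\pi_1(V_1)=\langle s_1,\dots,s_g\rangle$ onto $\pi_1(M)$ with kernel normally generated by $R$.

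Next I would observe that $S_1$ is still a deformation retract of the punctured handlebody $V_1\setminus N(p)$: first retract off a collar of $\partial V_1$ containing $N(p)\cap V_1$, landing in a copy of $V_1$ that contains $S_1$, then retract that onto $S_1$. Consequently $X_1=(V_1\setminus N(p))\times I$ deformation retracts onto $S_1\times\{1/2\}$, and $X=M^\circ\times I$ retracts onto $M^\circ\times\{1/2\}$, under which the inclusion $X_1\hookrightarrow X$ becomes $S_1\hookrightarrow V_1\setminus N(p)\hookrightarrow M^\circ$. Chaining this with the previous paragraph, the composite
\[
\pi_1(S_1\times\{1/2\})\xrightarrow{\ \cong\ }\pi_1(X_1)\longrightarrow\pi_1(X)=\pi_1(B(\Sigma))\xrightarrow{\ \cong\ }\pi_1(M)
\]
sends the loop $s_i$ to the $i$-th generator of $\pi_1(M)$ and is onto with kernel $\langle\langle R\rangle\rangle$. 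Thus $\langle s_1,\dots,s_g\mid R\rangle$ is a presentation of $\pi_1(B(\Sigma))$ induced by $S_1\times\{1/2\}$; since the basepoint was placed on the spine, the path appearing in the definition of $\mathcal{N}$ is constant and introduces no conjugation.

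The only mildly delicate step is the claim that the chosen spine $S_1$ survives as a deformation retract of $V_1\setminus N(p)$ — this merely requires isotoping the spine and the retraction away from the boundary half-ball $N(p)\cap V_1$ — together with the verification that the comparison square commutes on $\pi_1$; everything else is routine bookkeeping with van Kampen and homotopy equivalences, so I do not anticipate a genuine obstacle.
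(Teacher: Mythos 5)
Your argument is correct and follows essentially the same route as the paper: identify $\pi_1(B(\Sigma))$ with $\pi_1(M^\circ)$ via the deformation retraction of $M^\circ\times I$ onto a level $M^\circ\times\{t\}$, identify $\pi_1(M^\circ)$ with $\pi_1(M)$ via the inclusion, and observe that under these identifications the spine $S_1\times\{1/2\}$ is carried to $S_1$, hence induces the same presentation. The paper's proof is terser (it simply names the two isomorphisms $i_\ast$ and $d_\ast$ and notes the retraction sends $S_1\times\{1/2\}$ to $S_1\times\{1\}$), whereas you additionally spell out the van Kampen computation for $\pi_1(M^\circ)\cong\pi_1(M)$, the commutative square comparing $V_1\setminus N(p)\hookrightarrow M^\circ$ with $V_1\hookrightarrow M$, and the fact that $S_1$ survives as a deformation retract of the punctured handlebody; these are exactly the details the paper leaves implicit, so your write-up is a fuller version of the same argument rather than a different one.
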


\begin{proof}
	Let $i_\ast: \pi_1(M^\circ)\rightarrow\pi_1(M)$ be the isomorphism induced by the inclusion $i : M^\circ \hookrightarrow M$.
	Since $M^\circ \times \{ 1 \}$ is a deformation retract of $X=M^\circ \times I$, there is an isomorphism $d_\ast :\pi_1(M^\circ) \rightarrow \pi_1(X)$ induced by the deformation retract. This retraction sends $S_1\times \{1/2\}$ to $S_1\times \{1\}$. This implies that the image under $i\circ d$ of $S_1\times \{1/2\}$ induces generators of $\pi_1(X)$ and it induces the same presentation for $\pi_1(X)$.
\end{proof}

We can reduce the construction of non-isotopic bisections of $X$ to construct non-isotopic Heegaard splittings by the following.

\begin{proposition}\label{prop:detect}
	Let $V_1\cup_{\Sigma} V_2$ and $V_1'\cup_{\Sigma'} V_2'$ be Heegaard splittings of the same 3-manifold $M$ and $B(\Sigma)$ and $B(\Sigma')$ bisections obtained from $V_1\cup V_2$ and $V_1'\cup V_2'$ respectively.
	If $\mathcal{N}(V_1)\neq \mathcal{N}(V_1')$, then $B(\Sigma)$ and $B(\Sigma^{'})$ are mutually non-isotopic.
\end{proposition}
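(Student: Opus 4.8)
The plan is to prove the contrapositive's negation directly: assuming $B(\Sigma)$ and $B(\Sigma')$ are isotopic, I will deduce $\mathcal{N}(V_1) = \mathcal{N}(V_1')$. By Lemma \ref{lem:funddam1}, if the two bisections $X_1 \cup X_2$ and $X_1' \cup X_2'$ of $X = M^\circ \times I$ are isotopic, then $\mathcal{N}(X_1) = \mathcal{N}(X_1')$, where $X_1 = (V_1 \setminus N(p)) \times I$ and $X_1' = (V_1' \setminus N(p')) \times I$. So it suffices to identify the Nielsen class $\mathcal{N}(X_1)$ of the bisection piece with the Nielsen class $\mathcal{N}(V_1)$ of the corresponding Heegaard handlebody, under the canonical identification of $\pi_1(X)$ with $\pi_1(M)$.

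The key step is therefore to track spines through the construction. Let $S_1$ be a spine of $V_1$ (with a single vertex, per the standing convention). Then $S_1 \times \{1/2\}$ is a spine of the $4$-dimensional $1$-handlebody $X_1 = (V_1 \setminus N(p)) \times I$ — provided $p$ is chosen off the spine, which we may always arrange since $S_1$ has codimension at least $2$ in $V_1$. By Lemma \ref{lem:spine}, the loops in $S_1 \times \{1/2\}$ induce, under the composite isomorphism $\pi_1(X) \xrightarrow{\sim} \pi_1(M^\circ) \xrightarrow{\sim} \pi_1(M)$, exactly the same generating tuple (up to the choice of connecting path, i.e.\ up to the conjugation built into the definition of $\mathcal{N}$) as the loops in $S_1$ do for $\pi_1(M)$. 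Consequently $\mathcal{N}(X_1)$, as a Nielsen class of generators of $\pi_1(X) \cong \pi_1(M)$, coincides with $\mathcal{N}(V_1)$; the same holds with primes. Combining this identification with Lemma \ref{lem:funddam1} gives: $B(\Sigma) \simeq B(\Sigma')$ implies $\mathcal{N}(X_1) = \mathcal{N}(X_1')$, hence $\mathcal{N}(V_1) = \mathcal{N}(V_1')$, which is the contrapositive of the claim.

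The main obstacle, modest as it is, lies in being careful about base points and connecting paths: the Nielsen class $\mathcal{N}(H_g)$ is defined only after fixing a base point $x_0$ and a path $p$ from $x_0$ to the spine's vertex, and different choices change the generating tuple by a global conjugation. I will need to check that the isomorphisms $i_\ast$ and $d_\ast$ from Lemma \ref{lem:spine} are compatible with such a choice — i.e.\ that one can pick base points and paths coherently in $M$, in $M^\circ$, and in $M^\circ \times I$ so that no extra ambiguity beyond the conjugation already absorbed into the definition of a Nielsen class is introduced. Since Nielsen classes are by construction insensitive to simultaneous conjugation of all generators, this bookkeeping causes no real difficulty, and the argument closes. (A remark worth inserting: the statement as displayed contains a typo — "$B(\Sigma)$ and $B(\Sigma)$ are mutually non-isotopic" should read "$B(\Sigma)$ and $B(\Sigma')$".)
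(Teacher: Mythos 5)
Your proposal matches the paper's proof in all essentials: both identify $S_1\times\{1/2\}$ as a spine of $X_1$, invoke Lemma \ref{lem:spine} to transfer Nielsen classes from $\pi_1(M)$ to $\pi_1(X)$ (so $\mathcal{N}(X_1)=\mathcal{N}(V_1)$ and likewise for the primed objects), and then apply Lemma \ref{lem:funddam1}. Your added care about placing $p$ off the spine, the base-point bookkeeping, and the typo flag are all reasonable refinements rather than a different argument.
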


\begin{proof}
	Let $S_1$ and $S_1'$ be spines of $V_1$ and $V_1'$ respectively. Since $X_1$ and $X_1'$ are $(V_1\setminus N(p))\times I$ and $(V_1'\setminus N(p'))\times I$, $S_1$ and $S_1'$ are isotopic to $S_1\times \{1/2\}$ and $S_1'\times \{1/2\}$ respectively.
	 Let $d$ and $d'$ be deformation retracts of $X_1$ and $X_1'$ to $(V_1\setminus N(p))\times \{1/2\}$ and $(V_1'\setminus N(p'))\times \{1/2\}$ respectively.
	 Since $S_1$ and $S_1'$ are spines of $V_1$ and $V_1'$, $S_1\times \{1/2\}$ and $S_1'\times \{1/2\}$ are deformation retracts of $(V_1\setminus N(p))\times \{1/2\}$ and $(V_1'\setminus N(p'))\times \{1/2\}$.
	 Hence $S_1\times \{1/2\}$ and $S_1'\times \{1/2\}$ are spines of $X_1$ and $X_1'$ respectively.
	 
	 By Lemma \ref{lem:spine}, we may indentify $\pi_1(M^\circ)$ with $\pi_1(X)$ so that these spines for $X_1$ and $X_1'$ are the same as them of $V_1$ and $V_1'$. Therefore $\mathcal{N}(V_1)=\mathcal{N}(X_1)$ and $\mathcal{N}(V_1')=\mathcal{N}(X_1')$. Since $\mathcal{N}(V_1)\neq \mathcal{N}(V_1')$, we obtain $\mathcal{N}(X_1)\neq \mathcal{N}(X_1')$. By Lemma \ref{lem:funddam1}, $B(\Sigma)$ and $B(\Sigma^{'})$ are mutually non-isotopic.
\end{proof}

By Proposition \ref{prop:detect}, we can construct non-isotopic bisections from a pair of non-isotopic Heegaard splittings.
Following Section 7 of \cite{I}, we have the following: 
\begin{theorem}\label{thm:bisec}
	For every $n\geq 2$, there exist 4-manifolds that admit non-isotopic $(2n, n)$-bisections of minimal genus.
\end{theorem}

\begin{remark}
We say that a bisection $X_1 \cup X_2$ is \textit{flippable} if the bisection $X_2 \cup X_1$ is isotopic to $X_1 \cup X_2$. By Proposition \ref{prop:detect}, one can show that there exist 4-manifolds admitting non-flippable bisections since there exist 3-manifolds admitting Heegaard splittings $V_1 \cup V_2$ with $\mathcal{N}(V_1) \not = \mathcal{N}(V_2)$ (see Lemma 3.29 in \cite{J}).

\end{remark}

\section{a bisection obtained from a trisection}\label{sec:bisec_from_trisec}
In this section, we consider bisections obtained from trisections.
Let $X=X_1\cup X_2\cup X_3$ be a trisection of $X$. Then,  $X_i\cap X_j$ is a bisection of $\overline{X\setminus X_k}$ for $\{i, j, k\}=\{1, 2, 3\}$.

\begin{lemma}\label{tri-bi}
Let $X=X_1\cup X_2\cup X_3$ be a trisection and $\overline{X\setminus X_3}=X_1\cup X_2$ the bisection obtained from the trisection $X_1\cup X_2\cup  X_3$.
Then, $\pi_1(X)$ and $\pi_1(X\setminus X_3)$ are isomorphic, and furthermore, the generator given by a spine of $X_1$ induces the same presentation for $\pi_1(X)$ and $\pi_1(X\setminus X_3)$.
\end{lemma}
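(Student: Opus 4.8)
The plan is to reconstruct $\pi_1(X)$ from $\pi_1(\overline{X\setminus X_3})$ by a single application of Seifert--van Kampen, using that $X_3$ is a $4$--dimensional $1$--handlebody whose boundary already carries all of its fundamental group, and then to read off the statement about presentations from the fact that $X_1$ is a common submanifold of the trisection and of the bisection it induces. The lemma is not hard; the one point to keep in mind is that the inclusion $\partial X_3\hookrightarrow X_3$ is not merely $\pi_1$--surjective but a $\pi_1$--\emph{isomorphism}, which is exactly what forces the amalgamated product produced by van Kampen to collapse onto $\pi_1(\overline{X\setminus X_3})$ rather than onto a proper quotient of it; everything else is bookkeeping with basepoints and paths.

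In detail, set $Y=\overline{X\setminus X_3}$, so that $X=Y\cup X_3$ with $Y\cap X_3=\partial X_3=H_{13}\cup H_{23}$, and note $\pi_1(X\setminus X_3)\cong\pi_1(Y)$ since $Y$ is homotopy equivalent to its interior $X\setminus X_3$. The trisection axioms give $X_3\cong\natural^{k_3}(S^1\times B^3)$, hence $\partial X_3\cong\#^{k_3}(S^1\times S^2)$; both $\pi_1(X_3)$ and $\pi_1(\partial X_3)$ are free of rank $k_3$, and the inclusion $\partial X_3\hookrightarrow X_3$ identifies the standard free generators of $\pi_1(\partial X_3)$ with those of $\pi_1(X_3)$, so it is an isomorphism (for $k_3=0$ both groups are trivial). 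Thickening $\partial X_3$ to a collar exhibits $X$ as a union of open sets homotopy equivalent to $Y$, $X_3$, and $\partial X_3$, all connected, so Seifert--van Kampen gives $\pi_1(X)\cong\pi_1(Y)\ast_{\pi_1(\partial X_3)}\pi_1(X_3)$. Since $\pi_1(\partial X_3)\to\pi_1(X_3)$ is an isomorphism, this pushout is canonically $\pi_1(Y)$; that is, the inclusion $j\colon Y\hookrightarrow X$ induces an isomorphism $j_\ast\colon\pi_1(Y)\xrightarrow{\ \cong\ }\pi_1(X)$, which is the first assertion.

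For the second assertion, note that $X_1$ is literally the same submanifold whether viewed as a piece of the trisection $X=X_1\cup X_2\cup X_3$ or of the bisection $Y=X_1\cup X_2$. Take a spine $S_1$ of $X_1$ with a single vertex $v$ and edges $s_1,\dots,s_{k_1}$, and choose the basepoint $x_0$ and a path $p$ from $x_0$ to $v$ inside $X_1$; then the loops $p\cdot s_i\cdot p^{-1}$ lie in $X_1\subseteq Y\subseteq X$, and $X_1\hookrightarrow X$ factors as $X_1\hookrightarrow Y\xrightarrow{\ j\ }X$. Hence the generators of $\pi_1(X)$ coming from $S_1$ are precisely the $j_\ast$--images of the generators of $\pi_1(Y)$ coming from $S_1$, and since $j_\ast$ is an isomorphism it identifies the two generating tuples and sends any set of defining relators for $\pi_1(Y)$ in these generators to one for $\pi_1(X)$; thus the induced presentations agree. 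In particular $\mathcal{N}(X_1)$ computed in the trisection is carried onto $\mathcal{N}(X_1)$ computed in the bisection by $j_\ast$ (a different choice of $x_0$ and $p$ only moves each tuple within its Nielsen class), and the epimorphism needed to define $\mathcal{N}(X_1)$ in $X$ is the standard $\pi_1$--surjectivity of a trisection handlebody, which passes to $Y$ because $j_\ast$ is an isomorphism.
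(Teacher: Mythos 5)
Your proof is correct but takes a genuinely different route from the paper's. The paper proves the lemma by appealing to Lemma 13 of Gay--Kirby, which provides a handle decomposition of $X$ adapted to the trisection: $X_1$ is a $0$--handle plus $1$--handles, $X_2$ carries the $2$--handles, and $X_3$ carries only $3$-- and $4$--handles. Since attaching $3$-- and $4$--handles does not change $\pi_1$, removing $X_3$ leaves $\pi_1$ unchanged, and because $X_1$ is exactly the $0$-- and $1$--handle part in both decompositions its spine induces the same presentation. You instead run a direct Seifert--van Kampen argument on the decomposition $X=Y\cup X_3$ with $Y\cap X_3=\partial X_3$, using the key fact that the inclusion $\partial X_3\hookrightarrow X_3$ is a $\pi_1$--isomorphism (both groups free of rank $k_3$, generators matched), so the amalgamated product collapses onto $\pi_1(Y)$. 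Your approach is more self-contained -- it does not require citing the trisected handle decomposition -- and it isolates the precise topological reason (the $\pi_1$--isomorphism $\partial X_3\to X_3$) behind the collapse, which is conceptually the same fact that makes $3$-- and $4$--handle attachment $\pi_1$--neutral. The paper's route is shorter given the citation and keeps the spine/generator bookkeeping transparent through the handle picture. One small point you could tighten: when you say the inclusion $\partial X_3\hookrightarrow X_3$ ``identifies the standard free generators,'' a clean justification is that a spine of the $4$--dimensional $1$--handlebody $X_3$ can be isotoped into $\partial X_3$ by general position (a graph in a $4$--manifold), so the inclusion is $\pi_1$--surjective, and a surjection between free groups of the same finite rank is an isomorphism since free groups are Hopfian.
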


\begin{proof}
	By Lemma 13 in \cite{GK}, $X_1$ can be a union of a $0$-handle and $1$-handles for a handle decomposition of $X$. Hence a spine of $X_1$ gives a generator of $\pi_1(X)$. 
	Also, by Lemma 13 in \cite{GK}, $X_2$ contains all the 2-handles of the handle decomposition and $X_3$ contains the 3- and 4-handles of $X$. 
	Hence the union $X_1\cup X_2$ determines the fundamental group of $X$.
	Therefore, there is an isomorphism $\varphi: \pi_1(X)\to \pi_1(\overline{X\setminus X_3})$.
	Furthermore,  the generator of $\pi_1(X)$ given by the spine of $X_1$  induces the same presentation for $ \pi_1(\overline{X\setminus X_3})$.
\end{proof}

Let $B(\mathcal{T})$ be the above bisection obtained from a trisection $\mathcal{T}$, where $\mathcal{T}=X_1\cup X_2\cup X_3$.

\begin{proposition}\label{prop:detect2}
	Let $\mathcal{T}=X_1\cup X_2\cup X_3$ and $\mathcal{T}'=X_1'\cup X_2'\cup X_3'$ be trisections of the same 4-manifold $X$ and $B(\mathcal{T})$ and $B(\mathcal{T}')$ bisections obtained from $\mathcal{T}$ and $\mathcal{T}'$ respectively.
	If $\mathcal{N}(X_1)\neq \mathcal{N}(X_1')$, then $B(\mathcal{T})$ and $B(\mathcal{T}')$ are mutually non-isotopic.
\end{proposition}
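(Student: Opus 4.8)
The plan is to follow the proof of Proposition~\ref{prop:detect} almost verbatim, with Lemma~\ref{tri-bi} playing the role that Lemma~\ref{lem:spine} played there. Recall that $B(\mathcal{T})$ is the bisection $\overline{X\setminus X_3}=X_1\cup X_2$ and $B(\mathcal{T}')=\overline{X\setminus X_3'}=X_1'\cup X_2'$. If these two $4$-manifolds are not diffeomorphic there is nothing to prove, since the notion of isotopy only compares bisections of one fixed manifold; so I will assume $\overline{X\setminus X_3}$ and $\overline{X\setminus X_3'}$ are diffeomorphic (as they are in the intended applications) and work inside a fixed model $Y$ for both.

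First I would invoke Lemma~\ref{tri-bi} to produce an isomorphism $\varphi\colon\pi_1(X)\to\pi_1(\overline{X\setminus X_3})$ under which the generating tuple of $\pi_1(X)$ determined by the spine of $X_1$ is carried to the generating tuple of $\pi_1(\overline{X\setminus X_3})$ determined by the same spine, now read off from $X_1$ as the first handlebody of the bisection $B(\mathcal{T})$. Because Nielsen equivalence of generating tuples is preserved by group isomorphisms, $\varphi$ carries the Nielsen class $\mathcal{N}(X_1)$ to the Nielsen class of the first piece of $B(\mathcal{T})$. Running the identical argument for $\mathcal{T}'$ yields an isomorphism $\varphi'$ identifying $\mathcal{N}(X_1')$ with the Nielsen class of the first piece of $B(\mathcal{T}')$.

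Next I would argue by contradiction: if $B(\mathcal{T})$ and $B(\mathcal{T}')$ were isotopic as bisections of $Y$, then Lemma~\ref{lem:funddam1} would force their first handlebodies to carry equal Nielsen classes in $\pi_1(Y)$. Transporting this equality back through $\varphi$ and $\varphi'$ would give $\mathcal{N}(X_1)=\mathcal{N}(X_1')$ in $\pi_1(X)$, contradicting the hypothesis. Hence $B(\mathcal{T})$ and $B(\mathcal{T}')$ are non-isotopic.

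The step I expect to be most delicate is the bookkeeping that makes the final contradiction legitimate: one must either check that the diffeomorphism used to identify $\overline{X\setminus X_3}$ and $\overline{X\setminus X_3'}$ with $Y$ can be chosen compatibly with $\varphi$ and $\varphi'$, or—which is enough—observe that Lemma~\ref{lem:funddam1} can be applied directly with the ambient manifold taken to be $\overline{X\setminus X_3}$, so that no single common model $Y$ is needed and the $\pi_1$-isomorphism induced by any isotopy already agrees with $\varphi$ and $\varphi'$ on the relevant generators. No new geometric input beyond Lemmas~\ref{tri-bi} and~\ref{lem:funddam1} should be required.
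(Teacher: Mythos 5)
Your proposal takes essentially the same approach as the paper: apply Lemma~\ref{tri-bi} to identify $\pi_1(X)$ with $\pi_1(\overline{X\setminus X_3})$ in a way that matches the spine-induced generating tuples, transport the inequality $\mathcal{N}(X_1)\neq\mathcal{N}(X_1')$ across, and conclude via Lemma~\ref{lem:funddam1}. The extra care you take about whether $\overline{X\setminus X_3}$ and $\overline{X\setminus X_3'}$ coincide (so that ``isotopic bisections'' is even well posed) is a reasonable point that the paper's own two-line proof simply elides; your second resolution---apply Lemma~\ref{lem:funddam1} directly in $\overline{X\setminus X_3}$---is what the paper implicitly does.
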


\begin{proof}
	By Lemma \ref{tri-bi}, we may identify $\pi_1(X)$ with $\pi_1(X\setminus X_3)$ so that spines of the trisections for $X_1$ and $X_1'$ can be identified with them of the bisections.
         Since $\mathcal{N}(X_1)\neq \mathcal{N}(X_1')$ in $\pi_1(X)$,  we also have $\mathcal{N}(X_1)\neq \mathcal{N}(X_1')$ in $\pi_1(X\setminus X_3)$. By Lemma \ref{lem:funddam1}, $B(\mathcal{T})$ and $B(\mathcal{T}')$ are not isotopic to each other.
\end{proof}

By Proposition \ref{prop:detect2}, we obtain the following corollary by considering non-isotopic trisections introduced in \cite[Corollary 7.2]{I}.
\begin{corollary}
	For every $n\geq 2$, there exist 4-manifolds that admit non-isotopic $(3n, n)$-bisections.
\end{corollary}

\section{a 4-section obtained from a bisection}\label{sec:4-sec}
In this section, we consider the 4-section obtained by doubling a bisection as a multisection constructed immediately from a bisection. 
Let $X=X_1\cup X_2$ be a bisection of $X$. Then, we can easily construct a 4-section of the double of $X$ by identifying two copies of the bisection of $X$ along their Heegaard splittings of the boundaries.
Let $D(\mathcal{B})$ be the 4-section for a bisection $\mathcal{B}$. 


It is naturally raised that whether 4-sections obtained by doubling non-isotopic bisections are non-isotopic or not.
In the following, we consider this question for the bisection constructed in Section 4. Recall that this is a bisection of $M^\circ \times I$ for a 3-manifold $M$.

\begin{lemma}\label{lem:spine3}
	Let $B(\Sigma)=X_1\cup X_2$ be a bisection of a 4-manifold $X=M^{\circ} \times I$ obtained from a Heegaard splitting $V_1\cup_{\Sigma} V_2$, $S_1$ a spine of $X_1$, and $\langle s_1, \dots, s_g\mid R\rangle$ a presentation of $\pi_1(X)$, where $s_i$ is a homotopy class of  loops in $S_1$.
	Then $S_1$ induces the same presentation for $\pi_1(DX)$.
\end{lemma}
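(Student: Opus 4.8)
The plan is to prove that the inclusion $\iota\colon X\hookrightarrow D(B(\Sigma))$ of the first copy of $X=M^\circ\times I$ into the doubled $4$-section induces an isomorphism $\iota_\ast\colon\pi_1(X)\xrightarrow{\ \cong\ }\pi_1(D(B(\Sigma)))$ that carries the spine generators $s_1,\dots,s_g$ to the corresponding spine classes. Granting this, the presentation $\langle s_1,\dots,s_g\mid R\rangle$ of $\pi_1(X)=\pi_1(B(\Sigma))$ is transported by $\iota_\ast$ to a presentation of $\pi_1(D(B(\Sigma)))$ on the same spine generators, which is exactly the assertion; this is the same mechanism as in Lemmas \ref{lem:spine} and \ref{tri-bi}.

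Fix a base point $x_0\in\partial X$ and write $D(B(\Sigma))=X\cup_{\partial X}X'$ as two copies of $X=M^\circ\times I$ glued along $\partial X$ by the identity, with $X_1\subset X$ so that the spine $S_1\subset X_1$ lies in the first copy. There is a folding retraction $f\colon D(B(\Sigma))\to X$ which is the identity on each copy; it is well defined because the two copies agree on the common boundary $\partial X$, and it satisfies $f\circ\iota=\mathrm{id}_X$. Hence $\iota_\ast$ is injective.

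For surjectivity, the key point is that $\partial X\hookrightarrow X$ is $\pi_1$-surjective. Indeed $\partial X=\partial(M^\circ\times I)$ contains the face $M^\circ\times\{0\}$, and $M^\circ\times\{0\}\hookrightarrow M^\circ\times I$ is a homotopy equivalence (inclusion of a deformation retract), so the composite $\pi_1(M^\circ\times\{0\})\to\pi_1(\partial X)\to\pi_1(X)$ is an isomorphism; in particular $\pi_1(\partial X)\to\pi_1(X)$ is onto. Now apply van Kampen's theorem to the cover $D(B(\Sigma))=U\cup V$ with $U\simeq X$, $V\simeq X'$, $U\cap V\simeq\partial X$, noting that $\partial X$ (which is $M\,\#\,(-M)$) is connected: $\pi_1(D(B(\Sigma)))$ is generated by the images of $\pi_1(X)$ and of $\pi_1(X')$. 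Any class in $\pi_1(X',x_0)$ is the image of some loop in $\partial X$ by the surjectivity just established, and such a loop has the same class in $\pi_1(D(B(\Sigma)))$ regardless of which copy it is pushed through; hence the image of $\pi_1(X')$ is contained in the image of $\iota_\ast$. Therefore $\iota_\ast$ is onto, and combined with injectivity it is an isomorphism.

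Finally, since $\iota$ is the inclusion of the first copy, it sends the loops in $S_1\subset X_1$ to themselves, so $\iota_\ast(s_i)$ is the class of the corresponding loop of $S_1$ viewed inside $D(B(\Sigma))$; this gives the claimed presentation. I expect no serious obstacle here: the argument is a routine van Kampen computation, and the only care needed is the base-point bookkeeping (all amalgamation takes place over the single connected surface $\partial X$) and the observation that $X_1$ with its spine sits unchanged inside the doubled $4$-section by construction. The structure mirrors the trisection-to-bisection step of Lemma \ref{tri-bi}.
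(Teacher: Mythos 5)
Your proof is correct, but it takes a genuinely different route from the paper's. The paper works at the level of explicit presentations: it writes $\pi_1(\partial X)=\pi_1(M\#{-M})=\langle v_1,\dots,v_g,v_1',\dots,v_g'\mid r,r'\rangle$ from the doubled Heegaard splitting, observes that the spines of $V_1\times\{0\}$ and $V_1\times\{1\}$ are both isotopic to $S_1$ in $X$ so that $\iota_\ast(v_i)=\iota_\ast(v_i')=s_i$ under the inclusion $\partial X\hookrightarrow X$, and then feeds all this into van Kampen for $DX=X\cup_{\partial X}X$ to read off $\langle s_1,\dots,s_g\mid R\rangle$ directly as a presentation of $\pi_1(DX)$. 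You instead prove the stronger structural statement that $\iota_\ast\colon\pi_1(X)\to\pi_1(D(B(\Sigma)))$ is an isomorphism carrying spine generators to spine generators, getting injectivity for free from the folding retraction $D(B(\Sigma))\to X$ (a split retraction, which the paper does not invoke) and surjectivity from $\pi_1$-surjectivity of $\partial X\hookrightarrow X$ together with van Kampen. Your argument is cleaner and would apply verbatim to the double of any manifold with $\pi_1$-surjective boundary inclusion, without ever writing down a presentation of $\pi_1(\partial X)$; the paper's computation is more concrete and stays closer to the Heegaard-diagram data that the rest of the section manipulates. Both are correct and yield the same conclusion.
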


\begin{proof}
	By the construction of $B(\Sigma)$, we can assume that the boundary of $X$ is a connected sum of $(V_1\cup V_2)\times \{0 \}$ and $(V_1\cup V_2)\times \{ 1 \}$.
	Suppose that the representation of $\pi_1(M)$ induced by $(V_1\cup V_2)\times \{ 0 \}$ is 
	\[
		\langle v_1, \ldots, v_g \mid r\rangle,
	\]
	where $r$ is a set of relations induced by the 2-handles. Then $(V_1\cup V_2)\times \{ 1 \}$ has the same presentation.
	To distinguish them, we write it as
	\[
		\langle v_1', \ldots, v_g' \mid r'\rangle.
	\] 
	Then, $\pi_1(\partial X)=\pi_1(M\# -M)$ has the following presentation by Van Kampen's theorem:
	\[
		\langle v_1, \ldots, v_g, v_1', \ldots, v_g'  \mid r, r'\rangle.
	\]
	We show that after applying Van Kampen's theorem to $\pi_1(X)$ and $\pi_1(M\# -M)$, the presentation of $\pi_1(DX)$ will be the same as $\langle s_1, \dots, s_g\mid R\rangle$.
	
	We can assume that spines of both $V_1\times \{0\}$ and $V_1\times \{1\}$ are isotopic to $S_1$ in $X$.
	Let $\iota_\ast : \pi_1(M\# -M) \rightarrow \pi_1(X)$ be the homomorphism induced by the inclusion $\iota: M\# -M\hookrightarrow X$.  
	Then, the image of $v_1, \ldots, v_g, v_1', \ldots, v_g'$ under this homomorphism is
	\[
		\iota_\ast (v_i)=s_i , \iota_\ast (v_i')=s_i \mbox{ for $i=1, \ldots , g$}
	\]
	since the spines of both $V_1\times \{0\}$ and $V_1\times \{1\}$ are isotopic to $S_1$.
	By Van Kampen's theorem, we obtain the following presentation of $\pi_1(DX)$
	\[
		\langle s_1, \dots, s_g \mid R\rangle
	\]
	since $R$ is the same as $r$ and $r'$ by Lemma \ref{lem:spine}.
\end{proof}

\begin{lemma}
	Let $\mathcal{B}=X_1\cup X_2$ and $\mathcal{B}'=X_1'\cup X_2'$ be bisections with $\mathcal{N}(X_1)\neq \mathcal{N}(X_1')$ constructed in Section 4.
	Then, the 4-sections $D(\mathcal{B})$ and $D(\mathcal{B}')$ are not isotopic to each other.
\end{lemma}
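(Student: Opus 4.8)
The plan is to run, one notch higher in the ``section count,'' the argument that deduced Proposition~\ref{prop:detect} from Lemma~\ref{lem:funddam1}: transport the Nielsen class of the first piece of the bisection to the first piece of the doubled 4-section using Lemma~\ref{lem:spine3}, and then invoke Lemma~\ref{lem:fund2}. First I would unwind the doubling. Write $X=M^\circ\times I$, $\mathcal{B}=B(\Sigma)=X_1\cup X_2$, let $-X$ be the mirror copy of $X$ used to form the double $DX$, and let $\overline{X_i}\subset -X$ be the copy of $X_i$. Then $D(\mathcal{B})$ is a 4-section of the closed 4-manifold $DX$ whose four pieces, in cyclic order, are $Y_1=X_1$, $Y_2=X_2$, $Y_3=\overline{X_2}$, $Y_4=\overline{X_1}$: indeed $Y_1\cap Y_2$ is the old handlebody $H_2$, $Y_2\cap Y_3$ and $Y_4\cap Y_1$ are the two handlebodies of $\partial X$ along which the double is formed, $Y_3\cap Y_4$ is the mirror of $H_2$, and the non-adjacent intersections recover the central surface $H_1\cap H_3$, so $D(\mathcal{B})$ is a genuine $(2n,n)$ 4-section. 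In particular $Y_1$ is literally $X_1$, now sitting inside $X\subset DX$, so a spine $S_1$ of $X_1$ (with a base point $x_0\in X$) is also a spine of $Y_1$. Since $\mathcal{B}$ and $\mathcal{B}'$ are bisections of the \emph{same} $X=M^\circ\times I$, the doubles $D(\mathcal{B})$ and $D(\mathcal{B}')$ are $(2n,n)$ 4-sections of one and the same closed 4-manifold $DX$, so asking whether they are isotopic is meaningful.

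Next I would transport the Nielsen class. By Lemma~\ref{lem:spine3}, a spine $S_1$ of $X_1$ inducing a given presentation of $\pi_1(X)$ induces the \emph{same} presentation of $\pi_1(DX)$; equivalently, the inclusion $X\hookrightarrow DX$ induces an isomorphism $\pi_1(X)\cong\pi_1(DX)$ carrying spine generators to spine generators, and in particular $\pi_1(Y_1)=F_n$ still surjects onto $\pi_1(DX)$ through $S_1$, so $\mathcal{N}(Y_1)$ is defined. Under this identification $\mathcal{N}(Y_1)=\mathcal{N}(X_1)$ in $\pi_1(DX)$, and, using the very same identification (it depends only on $X\hookrightarrow DX$), $\mathcal{N}(Y_1')=\mathcal{N}(X_1')$ for $\mathcal{B}'$. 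Now suppose, for contradiction, that $D(\mathcal{B})$ and $D(\mathcal{B}')$ are isotopic 4-sections with the labeling above. Then Lemma~\ref{lem:fund2} forces $\mathcal{N}(Y_1)=\mathcal{N}(Y_1')$, hence $\mathcal{N}(X_1)=\mathcal{N}(X_1')$, contradicting the hypothesis $\mathcal{N}(X_1)\ne\mathcal{N}(X_1')$; therefore $D(\mathcal{B})$ and $D(\mathcal{B}')$ are not isotopic.

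The only real content is the compatibility of spines and fundamental groups under the doubling, and that is exactly what Lemma~\ref{lem:spine3} supplies, so I do not anticipate a genuine obstacle; the point to keep straight is that $\mathcal{N}(Y_1)$ must be read off the spine lying in the first copy $X$, so that the passage from $\pi_1(X)$ to $\pi_1(DX)$ leaves its Nielsen equivalence class undisturbed. I would also add a sentence of bookkeeping: since ``isotopic'' for multisections is defined with a fixed ordering of the pieces, exhibiting the ordering $Y_1=X_1,\dots,Y_4=\overline{X_1}$ is all that is required; if one wished in addition to forbid isotopies that cyclically rotate or reflect the labels, it would suffice to compare the Nielsen classes of the four pieces $X_1,X_2,\overline{X_2},\overline{X_1}$ with those of $D(\mathcal{B}')$, which can be computed by the same reasoning as in Lemma~\ref{lem:spine3}.
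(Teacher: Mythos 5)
Your proof is correct and takes essentially the same route as the paper's: identify $\pi_1(X)$ with $\pi_1(DX)$ via Lemma~\ref{lem:spine3} so that the spine of $X_1$ serves as the spine of the first piece of $D(\mathcal{B})$, then conclude $\mathcal{N}(X_1)\neq\mathcal{N}(X_1')$ persists in $\pi_1(DX)$ and apply Lemma~\ref{lem:fund2}. You merely spell out the cyclic labeling $Y_1,\dots,Y_4$ of the doubled pieces, which the paper leaves implicit.
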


\begin{proof}
	By Lemma \ref{lem:spine3}, we may identify $\pi_1(X)$ and $\pi_1(DX)$ so that spines of the bisections of $X$ for $X_1$ and $X_1'$ can be identified with them of the 4-sections.
         Since $\mathcal{N}(X_1)\neq \mathcal{N}(X_1')$ in $\pi_1(X)$,  we also have $\mathcal{N}(X_1)\neq \mathcal{N}(X_1')$ in $\pi_1(DX)$. By Lemma \ref{lem:fund2}, $D(\mathcal{B})$ and $D(\mathcal{B}')$ are mutually non-isotopic.
\end{proof}

Consequently, we obtain the following:

\begin{theorem}\label{thm:4-sec}
	For every $n\geq 2$, there exist 4-manifolds which admit non-isotopic $(2n, n)$ 4-sections of minimal genus.
\end{theorem}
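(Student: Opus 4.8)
The plan is to obtain these 4-sections as doubles of the minimal-genus bisections produced in Theorem~\ref{thm:bisec}, so that non-isotopy is handed to us by the lemma immediately preceding the present statement; the only genuinely new points are to read off the type $(2n,n)$ of the resulting 4-section and to check that its genus is minimal. Concretely, fix $n\ge 2$ and a lens space $L(p,q)$ with $p\ge 2$, set $M=\#^{n}L(p,q)$ and $X=M^{\circ}\times I$. Following Section~7 of \cite{I} there are Heegaard splittings $V_{1}\cup_{\Sigma}V_{2}$ and $V_{1}'\cup_{\Sigma'}V_{2}'$ of $M$ with $\mathcal{N}(V_{1})\ne\mathcal{N}(V_{1}')$; by Proposition~\ref{prop:construction} and Proposition~\ref{prop:detect} they yield genus-$2n$ bisections $\mathcal{B}=B(\Sigma)=X_{1}\cup X_{2}$ and $\mathcal{B}'=B(\Sigma')=X_{1}'\cup X_{2}'$ of $X$ with $\mathcal{N}(X_{1})\ne\mathcal{N}(X_{1}')$, i.e.\ exactly the bisections underlying Theorem~\ref{thm:bisec}. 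Put $W_{n}=DX=D(M^{\circ}\times I)$ and form the 4-sections $D(\mathcal{B})$ and $D(\mathcal{B}')$ of $W_{n}$. By the lemma just above, $\mathcal{N}(X_{1})\ne\mathcal{N}(X_{1}')$ forces $D(\mathcal{B})$ and $D(\mathcal{B}')$ to be non-isotopic.

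Next I would identify the type. List the pieces of $D(\mathcal{B})$ cyclically as $X_{1},X_{2},\overline{X_{2}},\overline{X_{1}}$, glued along $H_{12}=X_{1}\cap X_{2}$, $H_{23}=X_{2}\cap\overline{X_{2}}$, $H_{34}=\overline{X_{1}\cap X_{2}}$, $H_{41}=X_{1}\cap\overline{X_{1}}$, where $H_{23}=H_{3}$ and $H_{41}=H_{1}$ are the two handlebody halves of the doubling hypersurface $\partial X=H_{1}\cup H_{3}$ from Proposition~\ref{prop:construction}. Each of the four pieces is $\natural^{g(M)}(S^{1}\times B^{3})=\natural^{n}(S^{1}\times B^{3})$, so $k_{i}=n$ for all $i$, and the central surface of $D(\mathcal{B})$ is the closed surface $H_{1}\cap H_{3}=\partial H_{2}$ of Proposition~\ref{prop:construction}, of genus $2g(M)=2n$ (doubling leaves it unchanged). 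Hence $D(\mathcal{B})$ is a $(2n;n,n,n,n)$ 4-section, that is, a $(2n,n)$ 4-section.

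For the genus I would argue as follows. The construction already realizes genus $2n$, and for the matching lower bound observe that one of the two cross-sections of $D(\mathcal{B})$, namely $H_{23}\cup H_{41}=H_{3}\cup H_{1}=\partial X=\partial(M^{\circ}\times I)$, is diffeomorphic to $M\#(-M)$; since $M=\#^{n}L(p,q)$ and Heegaard genus is additive under connected sum, $g(M\#(-M))=2g(M)=2n$, and because every cross-section of a genus-$g$ $n$-section carries a genus-$g$ Heegaard splitting, the genus of $D(\mathcal{B})$ is at least $2n$, hence equals $2n$. To promote this to the assertion that $2n$ is the \emph{minimal} 4-section genus of $W_{n}$, note that an arbitrary $(g;k_{1},\dots,k_{4})$ 4-section of $W_{n}$ decomposes $W_{n}=(X_{1}\cup X_{2})\cup_{Y}(X_{3}\cup X_{4})$, where $X_{1}\cup X_{2}$ and $X_{3}\cup X_{4}$ are \emph{bisected} 4-manifolds in the sense of Definition~\ref{def:bisec} — hence $2$-handlebodies by \cite{IN} — of bisection genus at most $g$, glued along a common boundary $Y$ carrying a genus-$g$ Heegaard splitting. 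Turning these $2$-handlebodies upside down shows that $\pi_{1}(Y)$ surjects onto each of $\pi_{1}(X_{1}\cup X_{2})$ and $\pi_{1}(X_{3}\cup X_{4})$, whose amalgam over $\pi_{1}(Y)$ is $\pi_{1}(W_{n})\cong\pi_{1}(M)=\mathbb{Z}/p\ast\cdots\ast\mathbb{Z}/p$ ($n$ factors, by Lemma~\ref{lem:spine3}); one then wants $g\ge g(Y)\ge 2n$.

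The main obstacle is exactly this last inequality, and coarse invariants are too weak for it: since $\pi_{1}(X_{1}\cup X_{2})$ and $\pi_{1}(X_{3}\cup X_{4})$ are each generated by at most $g$ elements, the rank of $(\mathbb{Z}/p)^{\ast n}$ only forces $g\ge n/2$, while $b_{2}(W_{n})=0$ and $\chi(W_{n})=2$ give nothing beyond $\sum k_{i}=2g$. The essential point is that $W_{n}$ is a \emph{double}: although its fundamental group has rank only $n$, a genus-$g$ 4-section is forced to produce a cross-section $Y$ whose Heegaard genus must ``remember'' the doubling hypersurface $M\#(-M)$. Making this precise — for instance by using Bass--Serre theory to control how the finite free factors of $(\mathbb{Z}/p)^{\ast n}$ sit inside the vertex groups of the splitting $\pi_{1}(X_{1}\cup X_{2})\ast_{\pi_{1}(Y)}\pi_{1}(X_{3}\cup X_{4})$, together with the constraint that $Y$ bounds a $2$-handlebody on each side, in order to force $g(Y)\ge 2n$ — is the one place where real work is needed; granting it, Theorem~\ref{thm:4-sec} follows by combining it with the construction and the type computation above.
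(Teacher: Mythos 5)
Your construction, the non-isotopy argument via the doubling lemma, and the type computation $(2n;n,n,n,n)$ all match the paper's route, and that part is fine. The genuine gap is the minimality claim, which you explicitly leave open after proposing a Bass--Serre analysis of the splitting $\pi_1(X_1\cup X_2)\ast_{\pi_1(Y)}\pi_1(X_3\cup X_4)$. That route is both unnecessary and, as you set it up, based on an overly weak accounting: it is not true that the rank constraint ``only forces $g\ge n/2$.''

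The point you are missing is that you should combine the two pieces you already wrote down. For any $(g;k_1,k_2,k_3,k_4)$ $4$-section of a closed $W$, inclusion--exclusion gives $\chi(W)=2+2g-\sum_i k_i$, so here $\sum_i k_i=2g$ since $\chi(W_n)=2$. Separately, for each $i$ the pair $X_i\cup X_{i+1}$ is a $2$-handlebody with boundary a cross-section $Y$, and $\pi_1(Y)\to\pi_1(X_{i+2}\cup X_{i+3})$ is onto (turn that $2$-handlebody upside down), so by Van Kampen $\pi_1(W)$ is a quotient of $\pi_1(X_i\cup X_{i+1})$; in turn $\pi_1(H_{i,i+1})\to\pi_1(X_{i+1})$ is onto (Heegaard splitting of $\partial X_{i+1}$), so $\pi_1(X_i\cup X_{i+1})$ is a quotient of $\pi_1(X_i)\cong F_{k_i}$. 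Hence $k_i\ge\operatorname{rank}\pi_1(W_n)=\operatorname{rank}\bigl((\mathbb{Z}/p)^{\ast n}\bigr)=n$ for every $i$, and then $2g=\sum_i k_i\ge 4n$ gives $g\ge 2n$, which is attained. You do not need to control the Heegaard genus of the cross-section $Y$ directly, nor Bass--Serre theory; the Euler characteristic identity plus the rank inequality $k_i\ge\operatorname{rank}\pi_1(W)$ (the same two facts Islambouli uses for trisections) already close the argument.
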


\begin{proof}
We show that the 4-section genus of the 4-manifolds in Theorem \ref{thm:4-sec} is $2n$. Suppose that the 4-section genus is not $2n$. Then, we can destabilize the 4-section more than once. If we destabilize the 4-section once, then the two tuple $(g,k_1)$ of the type $(g;k_1,k_2,k_3,k_4)$ of the destabilized 4-section is $(2n-1,n)$ or $(2n-1,n-1)$. If $(g,k_1)=(2n-1,n)$, then the genus of the trisection obtained by Proposition 8.4 in \cite{IN} is $3n-2$. If $(g,k_1)=(2n-1,n-1)$, then the genus of the trisection obtained by the same method is $3n-1$. However, both cases contradict Corollary 7.2 in \cite{I} (see also Remark \ref{rem:spin}). One can lead the contradiction in the same way if destabilizations are performed more than twice.
\end{proof}

\begin{remark}\label{rem:spin}
The double of $X=M^{\circ} \times I$ is diffeomorphic to the spin of $M$. Therefore, Theorem \ref{thm:4-sec} is a 4-sectional analogy of \cite[Corollary 7.2]{I}.
\end{remark}

Finally, we depict the 4-section diagram with respect to the above 4-section. Since the double of a bisected 4-manifold is obtained by gluing two copies of the 4-manifold identically, the new 1-handlebody $H_4$ plays the same role as $H_2$ in the central surface. Therefore, for a bisection diagram, if we depict the parallel copy of the blue curves as orange curves, the resulting diagram is the desired 4-section diagram. For example, Figure \ref{4-secdiag} is the 4-section diagram when $M=L(2,1)$. One can easily check that this 4-section diagram describes the double of $L(2,1)^{\circ} \times I$, that is, the spun $L(2,1)$, or equivalently, Pao's manifold $L_2$ via Kirby calculus.

\begin{figure}[h]
		\centering
		\includegraphics[scale=1]{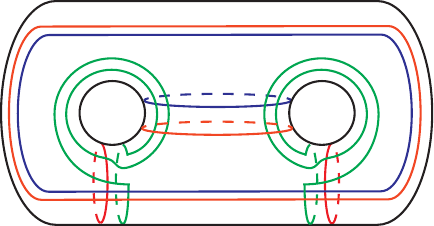}
		\caption{This figure is the 4-section diagram concerning the 4-section obtained by doubling the bisection of $L(2,1)^\circ \times I$ constructed in Proposition \ref{prop:construction}. Compare with Figure \ref{bidiag1}.}
		\label{4-secdiag}
	\end{figure}

\section{A multisection obtained from a modified bisection}\label{sec:multi0}
In this section, as a generalization of Theorem \ref{thm:4-sec}, we construct an $n$-section of a 4-manifold with boundary and infinitely many 4-manifolds with non-isotopic $n$-sections for arbitrary $n$ by modifying a bisection.
Let $X=X_1\cup X_2$ be a bisection such that $\partial X\cap X_1=H_1$, $X_1\cap X_2=H_2$ and $\partial X\cap X_2=H_3$.

We regard $H_2\cup H_2$ as the double of $H_2$. 
Then, $H_2\cup H_2$ is a genus $g$ Heegaard splitting of $\#^g S^1\times S^2$.
On the other hand, we may consider $\natural{^k }S^1\times B^3$ and its boundary $\partial (\natural{ ^g} S^1\times B^3)\cong \#^g S^1\times S^2$.
The boundary $\partial (\natural{ ^g} S^1\times B^3)$ admits a Heegaard splitting as follows; we decompose $\partial B^3$ into two disks $D_1$ and $D_2$ that are identified by their boundaries. Then $\partial (\natural{ ^g} S^1\times B^3)$ is decomposed into $(\natural{ ^g} S^1\times D_1)\cup (\natural{ ^g} S^1\times D_2)$, and thus we can identify this Heegaard splitting with $H_2\cup H_2$. 

Let $X'=X_1\cup X_2\cup X_3$ be a 4-manifold satisfying the following conditions:

\begin{itemize}
        \item $X_3$ is a 1-handlebody,
	\item $X_1\cap X_3=H_2$, $X_2\cap X_3=H_2$, and
	\item $\partial X'=H_1\cup H_3$.
\end{itemize}

Then $X_1\cup X_2\cup X_3$ is a trisection with boundary $H_1\cup H_3$.

\begin{lemma}
	The 4-manifold $X'$ is diffeomorphic to $X$.
\end{lemma}
\begin{proof}
	The multisection diagram of $X_1\cup X_2\cup X_3$ is obtained by adding to a bisection diagram of the bisection $X_1 \cup X_2$ a family of simple closed curves corresponding to the boundaries of the complete meridian disk system of the other $H_2$.
	By the construction of a Kirby diagram from a multisection diagram in \cite{IN}, the obtained Kirby diagram is the same as the Kirby diagram obtained from the bisection diagram of $X_1\cup X_2$.
\end{proof}

We will decompose $\natural^g S^1\times B^3$ into $n$ copies of $\natural^g S^1\times B^3$.
First of all, we consider a decomposition of $B^3$.
Let $B_1, B_2, \cdots , B_n$ be 3-balls and $B_i\cap B_{i+1}=D_i$ a disk for $i=1, \ldots , n-1$,  otherwise $B_{i}\cap B_{j}$ is empty. Then, $B_1\cup B_2\cup \cdots \cup B_n$ is a 3-ball.
Then we obtain the decomposition 
\[
	\natural^g S^1\times B^3 = (\natural^g S^1\times B_1)\cup (\natural^g S^1\times B_2)\cup \cdots \cup (\natural^g S^1\times B_n).
\]
This decomposition satisfies
\begin{itemize}
	\item $X_i=(\natural^g S^1\times B_i)\cong \natural^g S^1\times B^3$ for any $i$,
	\item $H_{ij}=X_i\cap X_j\cong \natural^g (S^1\times B^2)$ if $|i - j|=1$ otherwise $X_i\cap X_j\cong \Sigma_g$,
	\item $\partial X_1=H_1\cup H_{12}$ and $\partial X_n=H_n\cup H_{n(n-1)}$ are genus $g$ Heegaard splittings, and
	\item $\partial (\natural^g S^1\times B^3) = H_1\cup H_n$ is a genus $g$ Heegaard splitting of $\partial (\natural^gS^1\times B^3)$.
\end{itemize}
A schematic picture of this decomposition is shown in Figure \ref{multisection_1}.
\begin{figure}[h]
		\centering
		\includegraphics[scale=0.7]{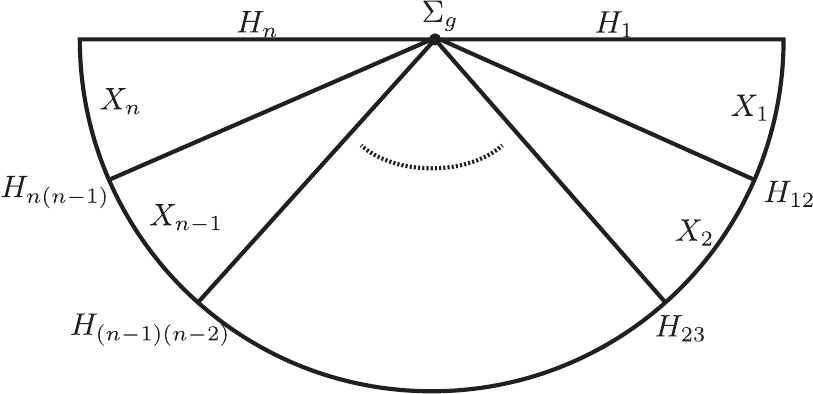}
		\caption{This is a schematic picture of the $n$-section of the 4-manifold with boundary.}
		\label{multisection_1}
	\end{figure}
By considering the above decomposition of $\natural^g S^1\times B^3$ instead of $X_3$ as above, we can obtain an $n$-section of $X$.

After applying this construction to a bisection we considered in Section \ref{sec:bisect_from_HS}, we can obtain an analogue of Lemma \ref{lem:spine}.

\begin{lemma}\label{lem:spine3}
Suppose that  $X=X_1\cup X_2$ is a bisection obtained in Section 4.
	Let $X=X_1\cup X_2\cup \cdots \cup X_n$ be an n-section of the 4-manifold $X$ obtained from the bisection $X_1 \cup X_2$ and $S_1$ a spine of $X_1$.
	Then $S_1$ induces the same presentation for $\pi_1(X)$ as appears in Lemma \ref{lem:spine}.
\end{lemma}

\begin{proof}
	Let $X_1\cup X_2\cup \cdots \cup X_n$ be an $n$-section obtained from a bisection $X_1\cup X_2$.
	Let $H_1=\partial X_1\cap \partial X$ and $H_n=\partial X_n\cap \partial X$.
	By the construction of the bisection $X_1\cup X_2$, $H_1$ gives a generating set of the fundamental group $\pi_1(X)$ of the $n$-sected 4-manifold $X$ and the spine of $X_1$ gives the same generating set. 
	Since a complete meridian disk system of each $H_{ij}$ is the same if $|i - j|=1$, we see that one of the presentations of $\pi_1(X)$ using this generating set is the same as in Lemma \ref{lem:spine}.
\end{proof}

By gluing the above $n$-section and the bisection, we have the following theorem from Lemma \ref{lem:fund2} and Theorem \ref{thm:bisec};

\begin{theorem}\label{thm:multi}
	For every $n\geq 2$ and $m\geq 4$, there exist 4-manifolds that admit non-isotopic genus $2n$ m-sections.
\end{theorem}

\begin{remark}
In the above construction of the $n$-section, some of the sectors satisfy $k_i\neq k_j$, where $k_i$ is the number of the boundary connected sum of $S^1\times B^3$ for each $i$. Namely, the $n$-section is unbalanced.
\end{remark}
 
 
 \begin{remark}\label{rem:multi}
 The above construction does not change the diffeomorphism type of the 4-manifold, that is, the resulting 4-manifold is $D(M^{\circ} \times I) \cong Spin(M)$. Therefore, Theorem \ref{thm:multi} is a multisectional analogy of \cite[Corollary 7.2]{I}. Note it is obvious that Theorem \ref{thm:4-sec} corresponds to the case $m=4$ in Theorem \ref{thm:multi}.
 \end{remark}
 
\section{A multisection obtained from several bisections}\label{sec:multi}

In this section, we consider a multisection of a 4-manifold obtained by gluing several bisections constructed in Proposition \ref{prop:construction} as another generalization of Theorem \ref{thm:4-sec}. Let $X \cup_H Y$ denote the 4-manifold obtained by gluing 4-dimensional 1-handlebodies $X$ and $Y$ identically along a 3-dimensional 1-handlebody $H$.

\begin{lemma}\label{lem:H_1}
For two copies $X$ and $X^{'}$ of the bisection constructed in Proposition \ref{prop:construction}, $\pi_1(X \cup_{H_1} X^{'}) \cong \pi_1(X) \cong \pi_1(X^{'})$.
\end{lemma}

\begin{proof}
Let $\langle S_1  \mid R_1\rangle$ and $\langle S_{1}^{'}  \mid R_{1}^{'}\rangle$ be presentations of $\pi_1(X)$ and $\pi_1(X^{'})$ respectively, where $S_1$ and $S_{1}^{'}$ are generating sets coming from spines of $X_1$ and $X_{1}^{'}$ respectively. Then, by Van Kampen's theorem, we have 
\[
\pi_1(X \cup_{H_1} X^{'}) \cong \langle S_1, S_{1}^{'} \mid R_1, R_{1}^{'}, I_1(y)=I_2(y)\ (y \in Y)\rangle,
\]
where $\pi_{1}(H_1)=\langle Y \mid \emptyset \rangle$ and $I_1 \colon \pi_1(H_1) \to \pi_1(X)$ and $I_2 \colon \pi_1(H_1) \to \pi_1(X^{'})$ are homomorphisms induced from the inclusions $i_1 \colon H_1 \to X$ and $i_2 \colon H_1 \to X^{'}$ respectively. Let $Y=\{v_1, \ldots, v_g, w_1, \ldots, w_g\}$, where $v_1, \ldots, v_g$ are coming from a spine of $(V_1 \setminus N(p)) \times 0 \subset H_1$ and $w_1, \ldots, w_g$ are coming from a spine of $(V_1 \setminus N(p)) \times 1 \subset H_1$. Then, we have $I_1(v_i)=I_2(w_i)=s_i$ and $I_1(v_{i})=I_2(w_i)=s_{i}^{'}$ for all $i \in \{1, \ldots, g\}$ since these spines are isotopic to a spine of $X_1$, where $s_i$ and $s_{i}^{'}$ are i-th generating elements of $S_1$ and $S_{1}^{'}$ respectively. Thus, we have 
\[
\pi_1(X \cup_{H_1} X^{'}) \cong \langle S_1, S_{1}^{'} \mid R_1, R_{1}^{'}, s_i=s_{i}^{'}\ (i \in \{1, \ldots, g\})\rangle \cong \langle S_1  \mid R_1\rangle.
\]
\end{proof}

\begin{lemma}\label{lem:H_3}
For two copies $X$ and $X^{'}$ of the bisection constructed in Proposition \ref{prop:construction}, $\pi_1(X \cup_{H_3} X^{'}) \cong \pi_1(X) \cong \pi_1(X^{'})$.
\end{lemma}

\begin{proof}
We can consider another presentation $\langle S_2  \mid R_2\rangle$ of $\pi_1(X)$, where $S_2$ is a generating set coming from a spine of $X_2$. Thus, we can use the argument in the proof of Lemma \ref{lem:H_1} in the same way.
\end{proof}

Using Lemma \ref{lem:H_1} and Lemma \ref{lem:H_3} repeatedly, we have the following lemma.

\begin{lemma}\label{lem:pi_1}
Let $X_1, \ldots, X_m$ be $m$ copies of the bisection. Then, $\pi_1(X_1 \cup_{H_1} X_2 \cup_{H_3} \dots \cup_{H_3} X_m) \cong \pi_1(X_i)$ if $m$ is odd and $\pi_1(X_1 \cup_{H_1} X_2 \cup_{H_3} \dots \cup_{H_1} X_m) \cong \pi_1(X_i)$ if $m$ is even.
\end{lemma}

Let $X^{m}$ denote $X_1 \cup_{H_1} X_2 \cup_{H_3} \dots \cup_{H_3} X_m$ if $m$ is odd and $X_1 \cup_{H_1} X_2 \cup_{H_3} \dots \cup_{H_1} X_m$ if $m$ is even (see Figure \ref{fig:Xm}).

\begin{figure}[h]
\begin{tabular}{cc}
\begin{minipage}{0.45\hsize}
\begin{center}
\includegraphics[width=6cm, height=3cm, keepaspectratio, scale=1]{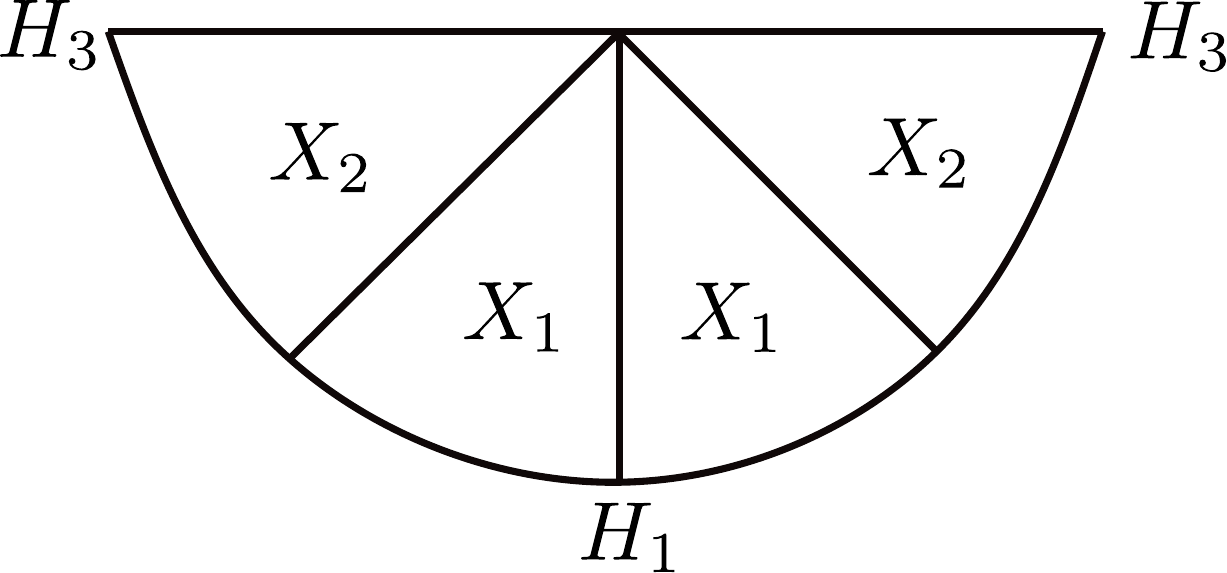}
\end{center}
\setlength{\captionmargin}{50pt}
\subcaption{$m=2$.}
\label{fig:}
\end{minipage} 
\begin{minipage}{0.55\hsize}
\begin{center}
\includegraphics[width=6cm, height=3cm, keepaspectratio, scale=1]{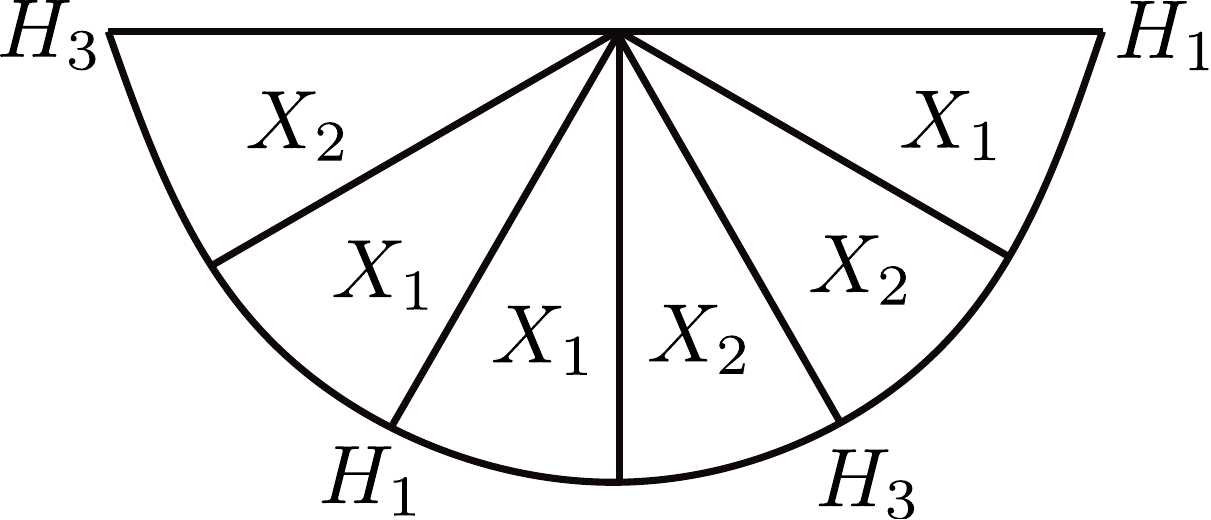}
\end{center}
\setlength{\captionmargin}{50pt}
\subcaption{$m=3$.}
\label{fig:}
\end{minipage} 
\end{tabular}
\setlength{\captionmargin}{50pt}
\caption{A schematic picture of the 4-manifold $X^{m}$.}
\label{fig:Xm}
\end{figure}

\begin{lemma}\label{lem:boundary}
The boundary $\partial{X^{m}}$ is $\partial{X_i}$ if $m$ is odd and $\#_{2g} S^1 \times S^2$ if $m$ is even.
\end{lemma}

\begin{proof}
Let $\partial{X_i} = H_1(i) \cup_{\phi_i} H_3(i)$, where $\phi_i \colon \partial{H_3(i)} \to \partial{H_1(i)}$. Then, $\partial{X^{2}} = H_3(1) \cup_{\psi} H_3(2)$, where $\psi = \phi_{2}^{-1} \circ id_{H_1(1)} \circ \phi_{1} \colon \partial{H_3(1)} \to \partial{H_1(1)} = \partial{H_1(2)} \to \partial{H_3(2)}$. Since $\phi_1 = \phi_2$, we have $\psi = id$. Thus, we have $\partial{X^{2}} = \#_{2g} S^1 \times S^2$. Similarly, $\partial{X^{3}} = H_3(1) \cup_{\omega} H_1(3)$, where $\omega = \phi_3 \circ \psi = \phi_3$. Thus, we have $\partial{X^{3}} = \partial{X_i}$. We can show the statement by repeating this argument.
\end{proof}

If $m$ is odd, let $X_0$ denote a copy of the bisection. If $m$ is even, let $X_0$ denote the bisection constructed in Proposition \ref{prop:construction} in the case $M = \#_{g} S^1 \times S^2$ ($\partial{X_0} = M \# -M = \#_{2g} S^1 \times S^2$). Since $\partial{X^{m}} = \partial{X_0}$ (Lemma \ref{lem:boundary}) for any integer $m \ge 1$, we can obtain a $4\ell+2$- (resp. $4\ell$-) section of $X^{m} \cup_{\partial} X_0$ if $m$ is even (resp. odd).

\begin{lemma}
For any integer $m \ge 1$, $\pi_1(X^{m} \cup_{\partial} X_0) \cong \pi_1(X_i)$.
\end{lemma}

\begin{proof}
By Lemma \ref{lem:pi_1}, one can show the statement by the argument in the proof of Lemma \ref{lem:H_1}.
\end{proof}

Therefore, we can construct non-isotopic multisections of 4-manifolds with even sectors in the same way as in Theorem \ref{thm:4-sec}. The minimality of genera is obtained by the consequence of Theorem \ref{cor_1}.

\begin{theorem}\label{thm:even}
For every $n \ge 2$ and $m \ge 2$, there exist 4-manifolds that admit non-isotopic $(2n,n)$ $2m$-sections of the minimal genus.
\end{theorem}


In the construction of $X^{m}$, $X_1 \cup_{H_1} X_1$ is diffeomorphic to $\natural_{g} S^1 \times B^3$. This is because $\partial{X_1} = H_1 \cup H_2$ is a Heegaard splitting of $\#_{2g} S^1 \times S^2$. Since $H_2 \cup H_2$ is the double of $H_2$ by the construction of $X^{m}$, $2g$ simple closed curves corresponding to the boundaries of the complete meridian disk system of $H_2$ are parallel to them of $H_2$ in the Heegaard diagram of the Heegaard splitting $\partial{X_1} = H_1 \cup H_2$. Therefore, the resulting bisection diagram describes $\natural_{g} S^1 \times B^3$.  

Thus, we can regard $X_1 \cup_{H_1} X_1$ as one sector. Namely, we can think of the number of sectors of $X^{m}$ as $4\ell+1$ if $m$ is odd and $4\ell+3$ if $m$ is even for some $\ell$. Therefore, we can construct non-isotopic multisections of 4-manifolds with odd sectors in the same way as in Theorem \ref{thm:4-sec}. The minimality of genera is obtained by the consequence of Theorem \ref{cor_1}.

\begin{theorem}\label{thm:odd}
For every $n \ge 2$ and $m \ge 2$, there exist 4-manifolds that admit non-isotopic $(2n,n)$ $2m+1$-sections of the minimal genus.
\end{theorem}

Combining Theorem \ref{thm:even} and Theorem \ref{thm:odd}, we can obtain non-isotopic multisections of 4-manifolds with any number of sectors.

\begin{remark}
Unlike Remark \ref{rem:multi}, the above construction may change the diffeomorphism type of the 4-manifold.
\end{remark}

\section{Acknowledgement}
The first author would like to express sincere gratitude to his supervisor, Hisaaki Endo, for his support and encouragement throughout this work. The first author was partially supported by Grant-in-Aid for JSPS Research Fellow from JSPS KAKENHI Grant Number JP23KJ0888. The authors would like to thank the referee for advising the construction of non-isotopic multisections of 4-manifolds with any number of sectors.

\end{document}